\declaretheorem[numberwithin=section]{theorem}
\declaretheorem[sibling=theorem]{proposition}
\declaretheorem[sibling=theorem]{lemma}
\declaretheorem[style=definition,sibling=theorem]{definition}
\declaretheorem[style=remark,numbered=no]{remark}
\declaretheorem[sibling=theorem]{fact}
\declaretheorem{conjecture}
\declaretheorem[sibling=conjecture]{question}
\DeclareMathOperator{\cf}{cf}
\DeclareMathOperator{\cof}{cof}
\DeclareMathOperator{\ot}{ot}
\DeclareMathOperator{\dom}{dom}
\newcommand{\seq}[2]{\langle #1 : #2 \rangle}
\newcommand{\Col}{\textup{Col}}
\newcommand{\ON}{\textup{ON}}
\newcommand{\GCH}{\textup{\textsf{GCH}}}
\newcommand{\ZFC}{\textup{\textsf{ZFC}}}
\newcommand{\PCF}{\textup{\textsf{PCF}}}
\newcommand{\s}{\subseteq}
\newcommand{\fr}{\Vdash}
\newcommand{\rest}{\upharpoonright}
\newcommand{\Q}{\mathbb Q}
\renewcommand{\P}{\mathbb P}
\newcommand{\T}{\mathbb T}
\renewcommand{\S}{\mathbb S}
\author{Maxwell Levine}
\title[Compactness of weak square at singulars of uncountable cofinality]{On compactness of weak square at singulars of uncountable cofinality}
\begin{document}

\maketitle

\begin{abstract} Cummings, Foreman, and Magidor proved that Jensen's square principle is non-compact at $\aleph_\omega$, meaning that it is consistent that $\square_{\aleph_n}$ holds for all $n<\omega$ while $\square_{\aleph_\omega}$ fails. We investigate the natural question of whether this phenomenon generalizes to singulars of uncountable cofinality. Surprisingly, we show that under some mild $\PCF$-theoretic hypotheses, the weak square principle $\square_\kappa^*$ is in fact compact at singulars of uncountable cofinality.\end{abstract}


\section{Introduction and Background}

The properties of singulars of uncountable cofinality are notoriously different from those of countable cofinality. A prime example is Silver's theorem that $\GCH$ cannot fail for the first time at a singular of uncountable cofinality. In contrast, Magidor showed that $\GCH$ can fail for the first time at $\aleph_\omega$. There is therefore a natural question of whether this phenomenon generalizes to more complex structures.\footnote{See \cite{LambieHanson-Silver} and \cite{Levine-Mildenberger2021} for recent examples.}

Here we focus on the combinatorial properties of inner models, notably square principles. Jensen originally distilled the principle $\square_\kappa$ (where $\kappa$ is some given cardinal) to study the properties of G{\"o}del's Constructible Universe $L$ \cite{Jensen1972}. Many variations of $\square_\kappa$ have been studied since then. (Precise definitions will be given below, but Cummings-Foreman-Magidor \cite{Cummings-Foreman-Magidor2001} is the canonical reference for this area.) There is in general a tension between square principles and large cardinals, one instance of which is that $\square_\kappa$ fails if $\kappa$ is larger than a supercompact cardinal. Moreover, the failure of $\square_\kappa$ at a singular cardinal $\kappa$ requires considerable consistency strength from large cardinals \cite{Sargsyan2014}. The models of interest in this area realize some compatibility of both square principles and the compactness properties exhibited by large cardinals.

In this paper we will address the compactness of square principles themselves: whether or not $\square_\kappa$ necessarily holds for some cardinal $\kappa$ if $\square_\delta$ holds for sufficiently many cardinals $\delta<\kappa$. Cummings, Foreman, and Magidor proved that it is consistent that $\square_{\aleph_n}$ holds for $1 \le n < \omega$ but that $\square_{\aleph_\omega}$ fails \cite{Cummings-Foreman-Magidor2003}. Later, Krueger improved the result by obtaining a bad scale on $\aleph_\omega$ in a similar model \cite{Krueger2013}. But such results can also go in the other direction: Cummings, Foreman, and Magidor also proved that if $\square_{\aleph_n}$ holds for all $n<\omega$, there is an object that to some extent resembles a $\square_{\aleph_\omega}$-sequence but with a weaker coherence property \cite{Cummings-Foreman-Magidor2004}. The main result of this paper is along these lines:

\begin{theorem}\label{terse-weak-sq} Suppose that $\kappa$ is a singular strong limit of cofinality $\lambda>\omega$ such that for some stationary set $S \subseteq \kappa$, $\square_\delta^*$ holds for all $\delta \in S$ and $\prod_{\delta \in S}\delta^+$ carries a good scale. Then $\square_\kappa^*$ holds.\end{theorem}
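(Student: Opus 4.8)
The goal is to construct a $\square_\kappa^*$-sequence, i.e.\ a sequence $\langle \mathcal C_\alpha : \alpha<\kappa^+,\ \alpha \text{ limit}\rangle$ such that each $\mathcal C_\alpha$ is a nonempty family of at most $\kappa$ clubs of $\alpha$, each of order type at most $\kappa$, with the coherence property that whenever $C\in\mathcal C_\alpha$ and $\beta$ is a limit point of $C$, then $C\cap\beta\in\mathcal C_\beta$. The plan is to build this sequence by transporting the given local weak squares \emph{upward} along the good scale. Fix the good scale $\vec f=\langle f_\alpha:\alpha<\kappa^+\rangle$ on $\prod_{\delta\in S}\delta^+$ and, for each $\delta\in S$, a $\square_\delta^*$-sequence $\vec{\mathcal D}^\delta=\langle\mathcal D^\delta_\eta:\eta<\delta^+\rangle$. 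Since $S$ is stationary, hence cofinal, in $\kappa$, the cardinals $\delta^+$ $(\delta\in S)$ are cofinal in $\kappa$ and the product is a legitimate $\PCF$ object; and since $\kappa$ is singular, every $\alpha<\kappa^+$ has $\cf(\alpha)<\kappa$, so I will organize the construction according to $\cf(\alpha)$ relative to $\lambda=\cf(\kappa)$.

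The heart of the construction is at the good points. Using goodness, fix a club $E\s\kappa^+$ witnessing it, and for each $\alpha\in E$ with $\cf(\alpha)\neq\lambda$ choose \emph{canonically} — so that the choices themselves cohere, e.g.\ as least exact upper bounds computed along a fixed $\s$-increasing continuous chain of elementary submodels carrying all the parameters — an exact upper bound $h_\alpha\in\prod_{\delta\in S}\delta^+$ of $\vec f\rest\alpha$ together with a coordinate $\delta_\alpha^*\in S$ past which $\cf(h_\alpha(\delta))=\cf(\alpha)$. For $\delta\ge\delta_\alpha^*$ the map $\beta\mapsto f_\beta(\delta)$ is strictly increasing on a cofinal subset of $\alpha$ with supremum $h_\alpha(\delta)$, so it pulls clubs of $h_\alpha(\delta)$ back to clubs of $\alpha$. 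For each such $\delta$ and each $D\in\mathcal D^\delta_{h_\alpha(\delta)}$ I place the (closed-off) pullback of $D$ into $\mathcal C_\alpha$. Since $|S|\le\kappa$ and $|\mathcal D^\delta_\eta|\le\delta<\kappa$, this contributes at most $\kappa$ clubs, each of order type at most $\kappa$; nonemptiness is immediate.

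Coherence is where the two hypotheses must interlock, and it is the reason one must pull back a genuine $\square_\delta^*$-sequence rather than arbitrary clubs. If $\beta$ is a limit point of the pullback $C$ of some $D$, then $\eta_\beta:=\sup\{f_\gamma(\delta):\gamma\in C\cap\beta\}$ is a limit point of $D$, so $D\cap\eta_\beta\in\mathcal D^\delta_{\eta_\beta}$ by local coherence; the canonical choice of exact upper bounds is arranged precisely so that $\beta\in E$ with $h_\beta(\delta)=\eta_\beta$, whence $C\cap\beta$ is exactly the pullback of $D\cap h_\beta(\delta)$ and therefore lies in $\mathcal C_\beta$. Points $\alpha\in E$ with $\cf(\alpha)=\lambda$, together with limit points of $E$ that do not themselves receive an upper bound, are filled in by continuity, using the approachability structure furnished by the good scale to read $\mathcal C_\alpha$ off the families already defined below. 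This is exactly where the uncountable cofinality $\lambda>\omega$ is indispensable: it guarantees that the amalgamations and intersections involved (of fewer than $\lambda$ clubs, and of witnessing sequences at limits of cofinality $<\lambda$) remain club and remain coherent — the step that provably fails at countable cofinality and is the source of the Cummings--Foreman--Magidor non-compactness at $\aleph_\omega$.

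I expect the main obstacle to be precisely this global coherence: making a single coherent choice of exact upper bounds along the scale so that the locally coherent pullbacks coming from different coordinates $\delta$ nest correctly across all of $\kappa^+$, while ensuring that limit points of the pullbacks again land among the good points and that each $\mathcal C_\alpha$ stays of size at most $\kappa$ with all order types at most $\kappa$. I anticipate this forces the construction to be carried out along a fixed internally approachable chain of models, so that the upper bounds $h_\alpha$, the coordinates $\delta_\alpha^*$, and the local $\square^*$-choices are all extracted uniformly; verifying closure of the pullbacks and their coherence at limits of cofinality $\le\lambda$ should be the most delicate and calculation-heavy part of the argument.
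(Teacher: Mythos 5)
Your general strategy---transporting the local $\square^*_\delta$-sequences up to $\kappa^+$ along the scale---is the right one in spirit, but the specific mechanism you propose does not work, and the failure is exactly at the point you flag as the ``main obstacle.'' You want to pull back, for each good $\alpha$ and each \emph{single} coordinate $\delta \ge \delta^*_\alpha$, a club $D \in \mathcal D^\delta_{h_\alpha(\delta)}$ along the map $\beta \mapsto f_\beta(\delta)$, and then to arrange coherence by choosing the exact upper bounds ``canonically'' so that $h_\beta(\delta) = \eta_\beta := \sup\{f_\gamma(\delta) : \gamma \in C \cap \beta\}$ at every limit point $\beta$ of every pullback. This cannot be arranged: exact upper bounds and goodness-witnessing sets are only determined modulo $=^*$, i.e.\ modulo eventual behavior across coordinates, so the value of $h_\beta$ at a \emph{fixed} coordinate $\delta$ carries no canonical information. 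Two cofinal subsets of $\beta$ witnessing goodness give sup-functions that agree only past a threshold depending on the pair, and since up to $\kappa^+$-many clubs from above may pass through a single $\beta$, there is no one coordinate past which all the relevant suprema agree with any single choice of $h_\beta$. An internally approachable chain does not repair this, because the obstruction is not non-uniformity of choice but the fact that one coordinate does not determine the limit. (A further problem: when $\cf(\alpha) \le \lambda$ the sequence $\vec f \rest \alpha$ has \emph{no} exact upper bound, so the objects $h_\alpha$ you want at points of cofinality $<\lambda$ do not exist.)

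The repair, which is what the paper does, is to make membership in each club depend on \emph{all sufficiently large coordinates simultaneously}: for each $\alpha$ one fixes a function $F$ with $F(i) \in \mathcal{C}^i_{f_\alpha(i)}$ for each $i$ in a fixed stationary set of coordinates $S \s \lambda$, and declares $\beta \in C_F$ iff $f_\beta(i) \in \lim F(i)$ for all sufficiently large $i \in S$. This definition is invariant under $=^*$-perturbations, so coherence at a limit point $\beta$ reduces to the coordinatewise coherence of the local sequences applied to $F(i) \cap f_\beta(i)$, with no canonical upper bound needed. The price is that the good scale must first be replaced by a \emph{totally continuous} one, with a separate notion of continuity at points of cofinality exactly $\lambda$ (a diagonal supremum, with equality only modulo clubs of $\lambda$), and one must then verify closure, unboundedness, order type $<\kappa$, and $|\mathcal{C}_\alpha| \le \kappa$ using Fodor's lemma and the strong limit hypothesis. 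These verifications---in particular the entire treatment of points of cofinality $\lambda$, which you defer to ``continuity''---are where most of the work lies.
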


This represents some progress on a question raised by Golshani online regarding a supposed Silver's Theorem for special Aronszajn trees \cite{Golshani-Question}: at any cardinal $\delta$, $\square_\delta^*$ is equivalent to the existence of a special $\delta^+$-Aronszajn tree \cite{Cummings2005}.

The difference between the results of Cummings-Foreman-Magidor and \autoref{terse-weak-sq} is that the resulting sequence is fully coherent---not just coherent at points of uncountable cofinality. In other words, we are able to obtain some compactness for a canonical object by obtaining exactly that canonical object in the end. We nonetheless depend on the goodness of scales, as do Cummings-Foreman-Magidor.

Note that the use of stationarity in \autoref{terse-weak-sq} is necessary:

\begin{proposition} Assuming the consistency of a supercompact cardinal, it is consistent that $\square_{\aleph_\alpha}$ holds for all double successor ordinals $\alpha<\omega_1$ and that there is a bad scale on $\aleph_{\omega_1}$, hence $\square_{\aleph_{\omega_1}}$ fails.\end{proposition}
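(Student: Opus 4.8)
The plan is to realize $\aleph_{\omega_1}$ as a singularized supercompact cardinal. First I would fix $\kappa$ supercompact in $V$, with a supercompactness embedding $j\colon V\to M$, $\crit(j)=\kappa$, and---after Laver-preparing $\kappa$ to be indestructible under $\kappa$-directed closed forcing and arranging $\GCH$ from $\kappa$ upward---force with a Magidor-style change-of-cofinality poset $\P$, built from a coherent sequence of measures of length $\omega_1$ extracted from the supercompactness of $\kappa$, with L\'evy collapses interleaved between the generic points. The generic object is a continuous increasing club $\seq{\kappa_\xi}{\xi<\omega_1}$ cofinal in $\kappa$, and the interleaved collapses $\Col(\kappa_\xi^+,<\kappa_{\xi+1})$ compress each block so that in $V[G]$ the cardinal $\kappa$ becomes exactly $\aleph_{\omega_1}$, with $\omega_1$ and all cardinals $\ge\kappa$ (in particular $\kappa^+=\aleph_{\omega_1+1}$) preserved. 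Since the double successor ordinals, being successors, miss the club of limit ordinals in $\omega_1$, the set $\{\aleph_\alpha : \alpha \text{ a double successor}\}$ is unbounded but \emph{non-stationary} in $\aleph_{\omega_1}$; thus having $\square_{\aleph_\alpha}$ exactly on this set is consistent with $\square^*_{\aleph_{\omega_1}}$ failing, and the construction does not contradict Theorem~\ref{terse-weak-sq}, which exploits the stationarity of $S$ rather than mere unboundedness.

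The square sequences at the double successors I would obtain exactly as the $\square_{\aleph_n}$ are obtained in the Cummings--Foreman--Magidor model: from the L\'evy collapses that create the corresponding successor cardinals (supplemented, if needed, by a ground-model preparation), with the bookkeeping of the blocks arranged so that these cardinals land precisely at the double-successor indices. That $\P$ preserves these sequences would follow from its Prikry property together with the factorization of $\P$, around each $\kappa_\xi$, as a small forcing followed by a highly closed one, which adds no new bounded subsets capable of violating coherence.

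For the bad scale---the heart of the argument---I would lift $j$ through a master condition to $j\colon V[G]\to M[H]$ and analyze the image of the generic club. By the reflection argument of Gitik--Sharon and Cummings--Foreman--Magidor, adapted to cofinality $\omega_1$, one shows that the approachability property $\AP_{\aleph_{\omega_1}}$ fails: concretely, the scale on $\prod_{\xi<\omega_1}\kappa_\xi^+$ provided by Shelah's theorem has stationarily many points of cofinality $\omega_2$ at which no cofinal subsequence is eventually coordinatewise increasing. Since the existence of such a bad scale implies $\neg\AP_{\aleph_{\omega_1}}$, and $\square^*_{\aleph_{\omega_1}}$ implies $\AP_{\aleph_{\omega_1}}$, this yields that $\square^*_{\aleph_{\omega_1}}$ fails, and a fortiori that $\square_{\aleph_{\omega_1}}$ fails.

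The hard part will be this last step in tension with preservation. At cofinality $\omega_1$ the conditions of $\P$ are no longer finite, so both the Prikry-property/preservation arguments and the reflection underlying the badness of the scale are considerably more delicate than in the $\aleph_\omega$ case. One must verify that the genericity of the $\omega_1$-club genuinely spoils approachability at stationarily many points of the chosen uncountable cofinality, while checking that neither the interleaved collapses nor the closure needed to carry the double-successor squares through inadvertently restores approachability. Balancing this closure against the genericity needed to defeat the scale, and confirming that $\P$ delivers exactly the cardinal structure $\kappa=\aleph_{\omega_1}$ with $\kappa^+$ intact, is where the real work lies.
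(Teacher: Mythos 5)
There is a genuine gap here: every step that carries the weight of the argument is deferred. You propose to singularize the supercompact itself, turning $\kappa$ into $\aleph_{\omega_1}$ by Magidor forcing with interleaved collapses. That is a research program rather than a proof: the Prikry property and cardinal structure for length-$\omega_1$ Magidor forcing with interleaved collapses, the verification that the resulting scale on $\prod_\xi \kappa_\xi^+$ is bad (the Gitik--Sharon analysis does not transfer routinely to uncountable cofinality), and the preservation of the squares through the singularizing forcing are each substantial theorems, and you explicitly flag them as ``where the real work lies'' without carrying them out. In addition, the source you name for the squares is too weak for the statement: interleaved L\'evy collapses give, via Specker's theorem under \GCH, only special Aronszajn trees, i.e. $\square^*_{\aleph_\alpha}$, whereas the proposition asserts full $\square_{\aleph_\alpha}$; you would need either an $L$-like ground model or explicit square-adding forcing interleaved into the construction, and then a further preservation argument. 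Your one structural observation---that the double-successor indices form a non-stationary subset of $\aleph_{\omega_1}$, so there is no conflict with \autoref{terse-weak-sq}---is correct and matches the paper's point, but it does not substitute for the construction.

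The paper takes a much shorter route that avoids Prikry-type forcing entirely by placing $\aleph_{\omega_1}$ \emph{above} the supercompact rather than at it. One forces with $\Col(\aleph_1,<\kappa)$, so that $\kappa=\aleph_2$ and $\aleph_{\omega_1}$ becomes $\kappa^{+\omega_1}$; by Levy--Solovay this small forcing preserves the reflection consequences of supercompactness, so a bad scale on $\aleph_{\omega_1}$ exists already in $V[\Col(\aleph_1,<\kappa)]$. The $\square_{\aleph_\alpha}$-sequences are then simply added by the product of square-adding posets $\prod_{\alpha=\beta+1<\omega_1}\S_{\aleph_\alpha}$. The only thing left to check is that this distributive product does not kill the badness of the scale, and that is handled by the threading trick: the squares-followed-by-threads two-step iteration is tame enough that regularity of $\aleph_{\omega_1+1}$ is preserved and the standard lifting arguments produce a bad scale in the threaded extension, whence badness pulls back to the intermediate model $W$. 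You should compare the two approaches: yours attacks the hardest possible configuration, while the intended proof gets the bad scale essentially for free and reduces the problem to a known preservation argument.
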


\begin{proof} We work in $V[\Col(\aleph_1,<\kappa)]$ and force with a product of square-adding posets $\prod_{\alpha=\beta+1<\omega_1}\S_{\aleph_{\alpha}}$ to get a model $W$. This model has a bad scale carried by $\aleph_{\omega_1}$ for the following reason: the added squares could be threaded by a product $\prod_{\alpha=\beta+1<\omega_1}\T_{\aleph_{\alpha},\aleph_{\alpha}}$ (where the threads added to the squares originally of length $\aleph_{\alpha+1}$ have length $\aleph_{\alpha}$). This will preserve regularity of $\aleph_{\omega_1+1}^W$ using the fact that if $\tau$ is a regular cardinal such that $\P$ has size $\le \tau$ and $\Q$ is $\tau^+$-distributive, then $\fr_\P ``\Q$ is $\tau^+$-distributive''. Standard lifting arguments then show that there is a bad scale on $\aleph_{\omega_1}^W$ in the extension by the product of threads, but this implies that there is already a bad scale in $W$, and hence that $\square_{\aleph_{\omega_1}}^*$ fails.\end{proof}

We also note that there exists in the literature a contrast to \autoref{terse-weak-sq} in the case of $\aleph_\omega$, even with good scales: 

\begin{fact}\label{contrast-prop} It is consistent that $\square_{\aleph_n}^*$ holds for all $n<\omega$, all scales are good, and $\square_{\aleph_\omega}^*$ fails.\end{fact}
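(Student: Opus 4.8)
The plan is to prove the consistency statement by forcing over a suitable ground model, producing a model where weak square holds at each $\aleph_n$, all scales on $\aleph_\omega$ are good, yet $\square_{\aleph_\omega}^*$ fails. Since $\square_\delta^*$ is equivalent to the existence of a special $\delta^+$-Aronszajn tree, the failure of $\square_{\aleph_\omega}^*$ amounts to showing that no special $\aleph_{\omega+1}$-Aronszajn tree exists in the final model. The contrast with \autoref{terse-weak-sq} is precisely that goodness of scales alone does not suffice to push weak square up to the singular; I would therefore aim to separate the two $\PCF$-theoretic hypotheses of the theorem by arranging for good scales while sabotaging the construction of a coherent weak square sequence.

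First I would start from a model with a supercompact cardinal $\kappa$ and reflect the relevant large-cardinal compactness down to $\aleph_\omega$ via a Levy collapse, following the standard Cummings-Foreman-Magidor template. The key ingredient is an indestructibility or lifting argument guaranteeing that after collapsing $\kappa$ to become $\aleph_\omega$, every scale on $\prod_n \aleph_n^+$ (equivalently on the relevant products of successors) remains good; goodness of all scales is a reflection-type consequence of supercompactness that survives mild collapses. Simultaneously I would add the weak square sequences $\square_{\aleph_n}^*$ at each finite level by an individual forcing $\S_{\aleph_n}^*$, or arrange them to hold in the collapse extension directly, so that the finite levels all carry special Aronszajn trees.

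The heart of the argument is to kill all special $\aleph_{\omega+1}$-Aronszajn trees while preserving both the finite-level weak squares and the goodness of scales. The natural tool is a forcing that specializes away, or more precisely that prevents the coherent threading needed to build a $\square_{\aleph_\omega}^*$-sequence, exploiting the compactness reflected from the supercompact to show that any putative special tree would have to reflect to some $\aleph_n$ in a way that contradicts the structure imposed by the collapse. I would use the fact that goodness of scales does not by itself produce the approachability or coherence required at the singular step: the obstruction lives in the mismatch between goodness (which controls the eventual-domination structure) and the stronger coherence demanded by a genuine weak square sequence.

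The main obstacle will be the simultaneous preservation requirements: one must verify that the forcing killing special $\aleph_{\omega+1}$-Aronszajn trees neither destroys the finite-level weak squares nor introduces a bad scale, and this is delicate because the goodness of \emph{all} scales is a strong demand that interacts nontrivially with any forcing acting at $\aleph_{\omega+1}$. The cleanest route is to cite the existing construction in the literature that already achieves exactly this configuration, since \autoref{contrast-prop} is stated as a \emph{fact} rather than as a result to be proven from scratch; accordingly, rather than reconstructing the full forcing iteration, I would give a proof that points to the relevant published model (in the Cummings-Foreman-Magidor circle of ideas) and indicate how the goodness of all scales is extracted there, verifying only that $\square_{\aleph_\omega}^*$ genuinely fails in that model via the special-tree characterization.
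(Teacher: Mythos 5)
Your overall instinct---realize the configuration in a Levy-collapse model built from supercompacts and ultimately cite the literature---points in the right direction, but the concrete plan you describe has two genuine problems. First, a single supercompact collapsed to become $\aleph_\omega$ is not the right setup: the model the paper uses is the Magidor--Shelah model, obtained from a ground model with $\omega$ many supercompact cardinals $\seq{\kappa_n}{1<n<\omega}$ by forcing with the full-support product $\prod_{n<\omega}\Col(\aleph_0,<\kappa_n)$, so that $\kappa_n$ becomes $\aleph_n$ and $\sup_n\kappa_n$ becomes $\aleph_\omega$. It is the supercompactness of \emph{each} $\kappa_n$ surviving into the tail of the product that yields both the goodness of all scales (a theorem of Magidor and Shelah, not a generic ``reflection-type consequence that survives mild collapses'') and the strong reflection property at $\aleph_{\omega+1}$.

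Second, and more seriously, the step you call the heart of the argument---a dedicated forcing that kills all special $\aleph_{\omega+1}$-Aronszajn trees while preserving the finite-level weak squares and the goodness of all scales---is both unnecessary and unsupported. No such preservation argument is given or known in this context, and the demand that \emph{all} scales remain good after an additional iteration acting at $\aleph_{\omega+1}$ is exactly the kind of interaction you flag as delicate without resolving. In the actual model nothing needs to be killed: $\square_{\aleph_\omega}^*$ fails automatically because the strong reflection property holds (Section 4 of Cummings--Foreman--Magidor), which is incompatible with weak square at $\aleph_\omega$. Likewise, no square-adding forcings $\S_{\aleph_n}$ are needed at the finite levels: $\GCH$ holds in the extension, so $\square_{\aleph_n}^*$ follows for every $n$ from Specker's theorem. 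So the correct proof is purely citation-based, with no auxiliary forcing beyond the product of collapses; your proposal replaces the two free ingredients with forcing constructions whose preservation properties would themselves be open problems.
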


If $\kappa_0 = \aleph_0$ and $\seq{\kappa_n}{1<n<\omega}$ is a sequence of supercompact cardinals in some ground model, then \autoref{contrast-prop} is witnessed in an extension by $\prod_{n<\omega}\Col(\kappa_0,<\kappa_n)$: Magidor and Shelah showed that all scales on $\aleph_\omega$ are good in this model \cite{Magidor1982}, $\square_{\aleph_\omega}^*$ fails because the strong reflection property holds (see Section 4 of \cite{Cummings-Foreman-Magidor2001}), and $\square_{\aleph_n}^*$ holds for all $n<\omega$ because of $\GCH$ by a theorem of Specker.


For the remainder of the introduction, we will focus on definitions. In \autoref{sec-zfc} we will prove \autoref{terse-weak-sq}. At the end of the paper we will prove that a result similar to \autoref{terse-weak-sq} holds for partial squares:


\begin{theorem}\label{almost-silver-partial-sq} Let $\kappa$ be a singular strong limit cardinal of cofinality $\lambda>\omega$. Suppose there is a stationary set $S \subset \kappa$ such that $\square_\delta$ holds for all $\delta \in S$ and such that $\prod_{\delta \in S} \delta^+$ carries a good scale. Then there is a partial square sequence on $\kappa^+ \cap \cof(> \lambda)$.\end{theorem}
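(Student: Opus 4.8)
The plan is to glue the coordinate-wise square sequences together along the good scale, exploiting the fact that at a point of cofinality $>\lambda$ the scale admits a canonical exact upper bound. Fix an increasing sequence $\seq{\delta_i}{i<\lambda}$ cofinal in $\kappa$ with each $\delta_i\in S$, a good scale $\vec f=\seq{f_\alpha}{\alpha<\kappa^+}$ in $\prod_{i<\lambda}\delta_i^+$, and for each $i$ a sequence $\vec C^i=\seq{C^i_\eta}{\eta<\delta_i^+}$ witnessing $\square_{\delta_i}$. For $\gamma\in\kappa^+\cap\cof(>\lambda)$, since $\cf(\gamma)>\lambda$ the pointwise supremum $H_\gamma(i)=\sup_{\alpha<\gamma}f_\alpha(i)$ is defined and below $\delta_i^+$ for co-boundedly many $i$, and $H_\gamma$ is then an exact upper bound of $\seq{f_\alpha}{\alpha<\gamma}$; goodness of $\vec f$ means that for a club of such $\gamma$ one has $\cf(H_\gamma(i))=\cf(\gamma)$ for all large $i$. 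I would build the partial square on the set $G$ of these good points, which is stationary (indeed contains a club) in $\cof(>\lambda)$, and observe that this is exactly a partial square sequence on $\kappa^+\cap\cof(>\lambda)$.

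Next I would define the clubs intrinsically from the fixed data, so that coherence comes essentially for free. For good $\gamma$ declare $\beta\prec\gamma$ when $\beta\in G$, $\beta<\gamma$, and $H_\beta(i)$ is a limit point of $C^i_{H_\gamma(i)}$ for co-boundedly many $i<\lambda$, and let $C_\gamma$ be the closure in $\gamma$ of $\{\beta:\beta\prec\gamma\}$. The coherence of each $\vec C^i$ is the engine: if $\beta\prec\gamma$ then for all large $i$ the limit point $H_\beta(i)$ of $C^i_{H_\gamma(i)}$ satisfies $C^i_{H_\beta(i)}=C^i_{H_\gamma(i)}\cap H_\beta(i)$, hence $\lim(C^i_{H_\beta(i)})=\lim(C^i_{H_\gamma(i)})\cap H_\beta(i)$. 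Since $\beta'<\beta$ in $G$ forces $H_{\beta'}(i)<H_\beta(i)$ for large $i$ (as $H_{\beta'}<^*H_\beta$), it follows that $\beta'\prec\beta$ iff $\beta'\prec\gamma$ whenever $\beta\prec\gamma$; this is precisely $C_\beta=C_\gamma\cap\beta$, applied in particular to limit points $\beta$ of $C_\gamma$ lying in $G$, which (as noted below) satisfy $\beta\prec\gamma$.

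The restriction to $\cof(>\lambda)$ is what makes the argument self-contained: coherence is demanded only at limit points of $C_\gamma$ of cofinality $>\lambda$, and such a point $\beta$ is automatically good, since $H_\beta(i)=\sup\{H_{\beta'}(i):\beta'\prec\gamma,\ \beta'<\beta\}$ is then a limit point of $C^i_{H_\gamma(i)}$ of cofinality $\cf(\beta)$, keeping the construction inside $G$; limit points of cofinality $\le\lambda$ require no coherence and may be ignored. The main obstacle I expect is verifying that each $C_\gamma$ is genuinely closed and unbounded in $\gamma$ with order type at most $\kappa$. Unboundedness is the delicate part: one must find, cofinally below $\gamma$, ordinals $\beta$ whose exact upper bounds simultaneously land on limit points of $C^i_{H_\gamma(i)}$ for all large $i$ at once, which calls for a diagonalization across the $\lambda$ coordinates and crucially uses $\cf(\gamma)>\lambda$ to close off the co-boundedly-many coordinate requirements. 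The varying thresholds hidden in the ``co-boundedly many $i$'' clauses must be controlled, e.g.\ by a pressing-down argument, so that they stabilize on a club. Once each $C_\gamma$ is seen to be a club of good points of the correct order type, the coherence computation above delivers the desired partial square sequence on $\kappa^+\cap\cof(>\lambda)$.
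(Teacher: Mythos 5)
Your overall architecture (coordinate-wise square sequences glued along the exact upper bounds of a good scale, with membership defined by landing co-boundedly often in the limit points of the coordinate clubs) is the same as the paper's, but three of your specific choices create genuine gaps. First, $H_\gamma(i)=\sup_{\alpha<\gamma}f_\alpha(i)$ is the supremum of up to $|\gamma|\le\kappa$ many ordinals below $\delta_i^+$, so there is no reason for it to lie below $\delta_i^+$ for co-boundedly many $i$, and even when it does it need not agree mod $=^*$ with the exact upper bound: a scale is only $<^*$-increasing, so the values $f_\alpha(i)$ for $\alpha<\gamma$ at a fixed coordinate are essentially unconstrained. The correct object is $\vec f_A(i)=\sup_{\beta\in A}f_\beta(i)$ for a cofinal $A\subseteq\gamma$ of order type $\cf\gamma<\kappa$ witnessing goodness; this is below $\delta_i^+$ for large $i$, is an exact upper bound (\autoref{exactgoodness}), and is well defined up to $=^*$ by the sandwich argument (\autoref{biggoodness}). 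The paper packages this by first replacing the scale with a totally continuous one (\autoref{totally-cont-scale}) so that $f_\gamma$ itself plays the role of your $H_\gamma$.

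Second, and more seriously, your claim that ``limit points of cofinality $\le\lambda$ require no coherence and may be ignored'' misreads the definition of a partial square sequence: clause (3) requires $C_\alpha\cap\gamma=C_\beta\cap\gamma$ at \emph{every} common limit point $\gamma$ of $C_\alpha$ and $C_\beta$, with no cofinality restriction on $\gamma$, and since your $C_\gamma$ is the topological closure of a set of points of cofinality $>\lambda$, limit points of cofinality $\omega$ are dense in it. Because you restrict the generating set to $G$, the key closure lemma (a supremum of $\tau\ne\lambda$ many members of $X_\gamma$ is again in $X_\gamma$, cf. \autoref{closure}) fails by fiat, and you have no way to see that a common limit point of small cofinality determines the two clubs below it. The paper avoids this by letting $X_\alpha$ contain \emph{all} $\beta<\alpha$ whose scale value lands co-boundedly in $\lim C^i_{f_\alpha(i)}$; then a common limit point $\gamma$ with $\cf\gamma\ne\lambda$ is itself in $X_\alpha\cap X_\beta$ and coordinate-wise coherence finishes, while $\cf\gamma=\lambda$ reduces to that case by interleaving. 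This choice also makes unboundedness much easier: the $\omega$-step interleaving of \autoref{unbounded} produces cofinality-$\omega$ members of $X_\alpha$, which suffices, whereas you must manufacture cofinally many $\prec$-predecessors of cofinality $>\lambda$ and only gesture at how. Finally, you never address $\ot C_\gamma\le\kappa$, which in the paper requires normalizing $\ot C^i_\xi<\kappa_i$ and the strong limit hypothesis (\autoref{weak-sq-small-ot}).
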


\subsection{Definitions}

We define square sequences in terms of a hierarchy introduced by Schimmerling \cite{Schimmerling1995}.


\begin{definition} We say that $\langle\mathcal{C}_\alpha\mid \alpha \in\lim (\kappa^+)\rangle$ is a $\square_{\kappa,\lambda}$-\emph{sequence} if for all limit $\alpha < \kappa^+$:

\begin{enumerate}

\item each $C\in\mathcal{C}_\alpha$ is a club subset of $\alpha$ with $\ot(C)\leq\kappa$;

\item for every $C\in \mathcal{C}_\alpha$, if $\beta\in\lim(C)$, then $C\cap\beta\in\mathcal{C}_\beta$;

\item $1\leq|\mathcal{C}_\alpha|\leq\lambda$.
\end{enumerate}
\end{definition}
The principle $\square_{\kappa, 1}$ is the original $\square_\kappa$, and $\square_{\kappa,\kappa}$ is the weak square, denoted $\square^*_\kappa$.


\begin{definition} If $\mu$ is a cardinal and $S \subset \lim(\mu^+)$ is stationary, then we say that $\seq{C_\alpha}{\alpha \in S}$ is a \emph{partial square sequence} if for all $\alpha \in S$:

\begin{enumerate}

\item $C_\alpha$ is closed and unbounded in $\alpha$;
\item $\ot(C_\alpha) \le \mu$;
\item if $\beta \in S$ and $\gamma \in \lim C_\alpha \cap \lim C_\beta$, then $C_\alpha \cap \gamma = C_\beta \cap \gamma$.

\end{enumerate}\end{definition}

\begin{definition}\

\begin{enumerate}

\item If $\tau$ is a cardinal and $f,g : \tau \to \ON$, then $f <^\ast g$ if there is some $j<\tau$ such that $f(i)<g(i)$ for all $i \ge j$. The analogous definitions hold for $>^*$ and $=^*$.

\item Given a singular cardinal $\kappa$, we say that a strictly increasing sequence $\vec \kappa = \seq{\mu_i}{i<\cf \kappa}$ of regular cardinals converging to $\kappa$ is a \emph{product} when we regard $\prod_{i<\cf \kappa}\mu_i$ as a space.

\item Given a product $\vec \kappa = \prod_{i<\cf \kappa} \mu_i$, a sequence $\seq{f_\alpha}{\alpha<\kappa^+}$ is a \emph{scale} on $\vec \kappa$ if:

\begin{enumerate}

\item for all $\alpha<\kappa^+$, $f_\alpha \in \vec \kappa$, i.e. $f_\alpha(i)<\mu_i$ for all $i<\cf \kappa$;\

\item for all $\beta<\alpha<\kappa^+$,  $f_\alpha <^\ast f_\beta$;

\item for all $g \in \vec \kappa$, there is some $\alpha<\kappa^+$ such that $g<^\ast f_\alpha$ (i.e$.$ $\seq{f_\alpha}{\alpha<\kappa^+}$ is cofinal in the product $\vec \kappa$).

\end{enumerate}

We also say that the product $\vec \kappa$ \emph{carries} $\vec f$.

\item We will use the term \emph{pseudo-scale} for an object resembling a scale that is not necessarily cofinal in the product $\vec \kappa$, i.e$.$ it satisfies (a) and (b) of the previous item. (However, since pseudo-scales are exact upper bounds, they are cofinal in \emph{some} product, see \cite{Handbook-Abraham-Magidor}.)

\item Given a scale (or pseudo-scale) $\vec f = \seq{f_\alpha}{\alpha<\kappa^+}$, $\alpha < \kappa^+$ is \emph{good} if there is some unbounded $A \subset \alpha$ with $\ot A = \cf \alpha$ and some $j<\cf \kappa$ such that for all $i \ge j$, $\seq{f_\beta(i)}{\beta \in A}$ is strictly increasing.


\item If there is a club $D \subset \kappa^+$ such that every $\alpha \in D$ with $\cf \alpha > \cf \kappa$ is a good point of $\vec f$, then $\vec f$ is a \emph{good scale}. An analogous definition applies for good pseudo-scales.


\end{enumerate}\end{definition}

The reason for defining pseudo-scales is that the cofinality clause of the definition of a scale will be largely irrelevant for our purposes.





\section{$\ZFC$ Results}\label{sec-zfc}

In this section we will prove the main results of the paper. We clarify notions of continuity in \autoref{sec-cont}, then we prove \autoref{terse-weak-sq} in \autoref{sec-weaksq}, and then we sketch an analogous theorem for partial squares in \autoref{sec-partsq}.

\subsection{Continuity}\label{sec-cont}

Our goal in this section is to obtain a strong concept of the continuity used by Cummings, Foreman, and Magidor for scales on a singular cardinal $\kappa$ of cofinality $\lambda$. The material concerning points $\alpha$ such that $\cf(\alpha)>\lambda$ is the same as theirs, but we want to consider some issues that arise when $\cf(\alpha) \le \lambda$. Specifically, continuity is trivial if $\cf(\alpha)<\lambda$, and we would like to modify the concept of continuity for the situation where $\cf(\alpha)=\lambda$ so that the square sequences we define are coherent.

Fix a singular $\kappa$ of cofinality $\lambda>\omega$. We will consider some fixed stationary $S \subseteq \lambda$ and a product $\vec \kappa = \prod_{i \in S} \mu_i$. This formulation will be important when we are considering $\alpha \in \kappa^+ \cap \cof(\lambda)$. Fix a pseudo-scale $\vec f$ on $\vec \kappa$.

\begin{proposition}\label{thinnedgoodness} If $\cf \alpha > \cf \kappa$ and $\alpha$ is a good point, then for any cofinal $B \subset \alpha$ with $\ot B = \cf \alpha$, there is some $B^\ast \subseteq B$ such that $B^*$ witnesses goodness of $\alpha$.\end{proposition}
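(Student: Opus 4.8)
The plan is to use the \emph{given} good witness for $\alpha$ to transfer goodness onto a cofinal subset of the arbitrary set $B$, with the regularity gap $\cf\alpha>\cf\kappa=\lambda$ supplying a pigeonhole that collapses the many pairwise thresholds into a single uniform one. Write $\mu=\cf\alpha$, a regular cardinal with $\mu>\lambda$, fix a witness $A$ for the goodness of $\alpha$ with threshold $j_A\in S$, and enumerate it increasingly as $A=\{a_\xi:\xi<\mu\}$ (legitimate since $\ot A=\cf\alpha=\mu$). First I would interleave $A$ and $B$: recursing on $\eta<\mu$ and using that both sets are cofinal in $\alpha$, I choose a strictly increasing $\langle\xi_\eta:\eta<\mu\rangle$ in $\mu$ and elements $b_\eta\in B$ with $a_{\xi_\eta}<b_\eta<a_{\xi_{\eta+1}}$ for every $\eta$, taking suprema of the $\xi_\eta$ at limit stages, which stay below $\mu$ by regularity.

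The purpose of this sandwiching is that every $<^\ast$-comparison I shall need is controlled by an index depending on a \emph{single} parameter $\eta$. Concretely, from $a_{\xi_\eta}<b_\eta<a_{\xi_{\eta+1}}$ the defining comparisons of $\vec f$ provide thresholds $u_\eta,v_\eta<\lambda$ with $f_{a_{\xi_\eta}}(i)<f_{b_\eta}(i)$ for all $i\ge v_\eta$ and $f_{b_\eta}(i)<f_{a_{\xi_{\eta+1}}}(i)$ for all $i\ge u_\eta$. Setting $w_\eta=\max(u_\eta,v_\eta)$, the map $\eta\mapsto w_\eta$ sends the regular cardinal $\mu$ into $\lambda$; since $\mu>\lambda$, some fibre $I=\{\eta<\mu:w_\eta=w^\ast\}$ has cardinality $\mu$ and is therefore unbounded in $\mu$. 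I then put $j^\ast=\max(w^\ast,j_A)<\lambda$ and $B^\ast=\{b_\eta:\eta\in I\}$.

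It remains to check that $B^\ast$ witnesses goodness of $\alpha$ with threshold $j^\ast$, and this is where the good witness $A$ does its work. For $\eta<\eta'$ in $I$ one has $\xi_{\eta+1}\le\xi_{\eta'}$, so for every $i\ge j^\ast$ the chain
$f_{b_\eta}(i)<f_{a_{\xi_{\eta+1}}}(i)\le f_{a_{\xi_{\eta'}}}(i)<f_{b_{\eta'}}(i)$
holds: the two outer inequalities follow from $i\ge u_\eta,v_{\eta'}$ (both $\le w^\ast\le j^\ast$ since $\eta,\eta'\in I$), while the middle one is trivial when $\xi_{\eta+1}=\xi_{\eta'}$ and otherwise is an instance of the coordinatewise monotonicity of $A$ above $j_A$ (using $i\ge j_A$). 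Thus $\langle f_{b_\eta}(i):\eta\in I\rangle$ is strictly increasing for each $i\ge j^\ast$. As $B^\ast\subseteq B$ is cofinal in $\alpha$ with $\ot B^\ast=\ot I=\mu=\cf\alpha$, this is exactly the required witness.

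The only genuine obstacle is the uniformity of the threshold: the naive interleaving yields, for each pair $\eta<\eta'$, a comparison valid only past a pair-dependent coordinate, and there is no reason a priori for these to have a common bound. The interleaving with $A$ is precisely the device that reduces each needed inequality to a one-parameter quantity $w_\eta$, so that the regularity hypothesis $\cf\alpha>\lambda$ can be applied as a pigeonhole, and the prescribed uniform goodness of $A$ is what bridges $b_\eta$ and $b_{\eta'}$ across the resulting gap.
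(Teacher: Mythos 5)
Your proof is correct and is essentially the paper's own argument: it is exactly the Sandwich Argument, interleaving $B$ with the given good witness $A$, collapsing the pair-dependent thresholds to a single one by pigeonhole using $\cf\alpha>\lambda$, and transferring the coordinatewise monotonicity of $A$ above its threshold to the selected $b_\eta$'s. No gaps; only the bookkeeping (indexing the interleaving by $\eta$ and stabilizing $w_\eta$ on a fibre $I$ rather than on an unbounded $X$) differs cosmetically from the paper.
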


This follows from what is known as ``The Sandwich Argument.''

\begin{proof} Suppose $A \subset \alpha$ witnesses goodness. Let $\tau = \cf \alpha$ and enumerate $A' := \seq{\alpha_\xi}{\xi<\tau} \subset A$ and $B' := \seq{\beta_\xi}{\xi<\tau} \subset B$ in such a way that for all $\xi<\tau$, $f_{\alpha_\xi} \le^* f_{\beta_\xi} <^* f_{\alpha_{\xi+1}}$. Observe that $A'$ also witnesses goodness of $\alpha$ with respect to some $j'$. For each $\xi<\tau$, let $j_\xi \ge j'$ be such that $i \ge j_\xi$ implies $f_{\alpha_\xi}(i) \le f_{\beta_\xi}(i) < f_{\alpha_{\xi+1}}(i)$. Then there is some unbounded $X \subset \tau$ and some $j<\lambda$ such that for all $\xi<\tau$, $j_\xi = j$. Since $j$ also witnesses goodness with respect to $A'$, this means that if $\xi,\eta \in X$ and $\xi < \eta$, then for all $i \ge j$, we have $f_{\beta_\xi}(i) < f_{\alpha_{\xi+1}}(i) \le f_{\alpha_\eta}(i) \le  f_{\beta_\eta}(i)$. We have proved the proposition with $B^* = \seq{\beta_\xi}{\xi \in X}$.\end{proof}

Modulo a short argument, this implies:

\begin{proposition}[See Remark 11.1 in \cite{Cummings2005}]\label{goodness-almost-everywhere} If a product $\vec \kappa$ carries a good scale $\vec f$, then there is a scale $\vec g$ such that \emph{every} $\alpha$ with $\cf \alpha > \cf \kappa$ is a good point of $\vec g$.\end{proposition}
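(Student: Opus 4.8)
The goal is to upgrade a good scale $\vec f$—which by definition has good points only on a \emph{club} $D$ of the relevant cofinality—into a scale $\vec g$ for which \emph{every} point of cofinality $>\cf\kappa$ is good. The natural strategy is to modify $\vec f$ on the non-good points by reindexing, using the club structure of $D$ to control where the damage is. Since $D$ is club in $\kappa^+$, its limit points sit inside $D$, and every $\alpha\in\kappa^+$ lies in a canonical half-open interval $\interval[open left]{\gamma}{\gamma'}$ determined by successive elements of $D$. The plan is to define $\vec g$ so that on each such interval the functions of $\vec f$ are composed with a shift that makes the goodness witnessed by $D$ propagate down to every point.

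Concretely, I would first invoke \autoref{thinnedgoodness}: because of the Sandwich Argument, once a point $\alpha$ with $\cf\alpha>\cf\kappa$ is good, \emph{any} cofinal subset of $\alpha$ of the right order type can be thinned to witness goodness. This is the key flexibility that lets us reshuffle indices without losing goodness at the points where we already have it. The main construction is then to define $g_\alpha$ for $\alpha$ in the interval between consecutive points of $D$ by ``borrowing'' the goodness from the next point of $D$ above $\alpha$: for $\gamma\in D$ a limit point and $\alpha\in\interval[open left]{\gamma}{\gamma'}$ where $\gamma'$ is the next element of $D$, one sets $g_\alpha = f_{h(\alpha)}$ for a suitable order-isomorphism or a pointwise maximum that inherits the strictly-increasing-on-a-tail property from the good point $\gamma'$.

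The key steps, in order, are: (1) fix the club $D$ of good points guaranteed by the hypothesis that $\vec f$ is a good scale; (2) verify that limit points $\alpha$ of $D$ with $\cf\alpha>\cf\kappa$ are automatically good by applying \autoref{thinnedgoodness} to $B = D\cap\alpha$, since $D\cap\alpha$ is cofinal in $\alpha$ of order type $\cf\alpha$; (3) handle the points $\alpha$ in the gaps $\interval[open left]{\gamma}{\gamma'}$ between successive elements of $D$, which is where the scale must be genuinely modified; (4) check that the modified sequence $\vec g$ is still $<^*$-increasing and cofinal (so that it is a genuine scale, not merely a pseudo-scale), which follows because we only permute or locally adjust within bounded blocks; and (5) confirm goodness at \emph{every} $\alpha$ with $\cf\alpha>\cf\kappa$, splitting into the case $\alpha\in\lim D$ handled by step (2) and the case where $\alpha$ falls strictly inside a gap, handled by step (3).

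The main obstacle will be step (3): arranging the redefinition inside each gap $\interval[open left]{\gamma}{\gamma'}$ so that it simultaneously (a) preserves the $<^*$-ordering across the gap boundaries with the already-good endpoints, (b) does not disturb cofinality, and (c) actually confers goodness on every point of cofinality $>\cf\kappa$ sitting inside the gap. The delicate point is that a point $\alpha$ of cofinality $>\cf\kappa$ strictly between two successive members of $D$ must have $\cf\alpha<|\interval[open left]{\gamma}{\gamma'}|$, and one must produce a cofinal witness inside the gap whose tail is coordinatewise increasing; the trick is to choose the witness set to track the goodness witness of the endpoint $\gamma'$ of the block, then apply \autoref{thinnedgoodness} once more to thin it. Because the excerpt flags this as following ``Modulo a short argument,'' I expect the honest content to be exactly this gap-filling bookkeeping, with the Sandwich Argument doing the heavy lifting and the interval combinatorics being routine but fiddly.
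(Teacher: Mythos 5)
Your overall instinct---use the club $D$ of good points together with \autoref{thinnedgoodness}---is the right one, but the construction you outline in step (3) is where the actual content lies, and as described it does not work; you have not closed the gap, you have only named it. The problem is structural: if $\gamma<\gamma'$ are \emph{successive} elements of $D$ and $\alpha\in(\gamma,\gamma')$ has $\cf\alpha>\cf\kappa$, then any $<^*$-preserving reassignment $h$ that keeps the indices of the block inside $(\gamma,\gamma']$ forces the goodness of $\alpha$ for $\vec g$ to be decided at the ordinal $\sup_{\beta<\alpha}h(\beta)$, which lies strictly inside $(\gamma,\gamma')$ and hence is \emph{not} an element of $D$ and need not be a good point of $\vec f$. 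There is nothing to ``borrow'' from $\gamma'$: a goodness witness for $\gamma'$ is a cofinal subset of $\gamma'$ of order type $\cf\gamma'$, and no order-isomorphism or pointwise-maximum trick confined to the block will transport it down to a point $\alpha$ that is bounded strictly below $\gamma'$. So step (3), the step you yourself flag as the main obstacle, genuinely fails as proposed.

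The intended ``short argument'' avoids the gaps entirely by discarding them. Let $\seq{\gamma_\alpha}{\alpha<\kappa^+}$ be the increasing (hence continuous) enumeration of $D$ and set $g_\alpha:=f_{\gamma_\alpha}$. This is still a scale: it is $<^*$-increasing because the enumeration is increasing, and cofinal in the product because $D$ is cofinal in $\kappa^+$. Now take any limit $\alpha$ with $\cf\alpha>\cf\kappa$. By continuity of the enumeration, $\sup_{\beta<\alpha}\gamma_\beta=\gamma_\alpha\in D$, so $\gamma_\alpha$ is a good point of $\vec f$ with $\cf\gamma_\alpha=\cf\alpha$. Choose $A_0\subset\alpha$ cofinal of order type $\cf\alpha$; then $B:=\{\gamma_\beta:\beta\in A_0\}$ is cofinal in $\gamma_\alpha$ of order type $\cf\gamma_\alpha$, so \autoref{thinnedgoodness} yields $B^*\subseteq B$ witnessing goodness of $\gamma_\alpha$, and the pullback $\{\beta\in A_0:\gamma_\beta\in B^*\}$ witnesses goodness of $\alpha$ for $\vec g$. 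This is exactly the role of \autoref{thinnedgoodness}: not to certify that limit points of $D$ are good (they lie in $D$, hence are good by hypothesis), but to move the witness into the set $\{\gamma_\beta:\beta<\alpha\}$ so that it pulls back along the reindexing. Your step (2) slightly misstates this point, and your steps (3)--(5) dissolve once the reindexing is global rather than blockwise.
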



\begin{definition} Suppose $\vec f = \seq{f_\alpha}{\beta<\alpha}$ is a $<^*$-increasing sequence on the product $\vec \kappa = \prod_{i \in S}\mu_i$, and that $A \subset \alpha$ is unbounded for some $\alpha < \kappa^+$ with $\ot A = \cf \alpha$.

\begin{itemize}
\item $\vec f_A$ denotes the function $i \mapsto \sup_{\beta \in A}f_\beta(i)$;
\item if $\cf \alpha = \cf \kappa$ and $A = \seq{\beta_i}{i<\cf \kappa}$, $\vec f^\Delta_A$ denotes the function $i \mapsto \sup_{j<i}f_{\beta_j}(i)$.
\end{itemize}
\end{definition}

\begin{definition} If $f$ and $g$ are functions on a product $\vec \kappa$, we write $f =^*_\Delta g$ if there is a club $C \subseteq \lambda$ such that for all $i \in C \cap S$, $f(i)=g(i)$. The definition for $f<^*_\Delta g$ is analogous.\end{definition}

\begin{definition} A scale $\vec f = \seq{f_\alpha}{\alpha<\kappa^+}$ is \emph{totally continuous} if the following hold:

\begin{itemize}
\item if $\cf \alpha < \cf \kappa$, then for all cofinal $A \subset \alpha$ with $\ot A = \cf \alpha$, $(\vec f \rest \alpha)_A =^\ast f_\alpha$;

\item if $\cf \alpha = \cf \kappa$, then for all \emph{clubs} $A \subset \alpha$ such that $\ot A = \cf \alpha$, we have $f_\alpha =^*_\Delta (\vec f \rest \alpha)_A^\Delta$;

\item if $\cf \alpha > \cf \kappa$, then $\alpha$ is a good point, $f_\alpha$ is an exact upper bound of $\seq{f_\beta}{\beta < \alpha}$, and for all cofinal $A \subset \alpha$ witnessing goodness of $\alpha$, we have $(\vec f \rest \alpha)_A =^* f_\alpha$.
\end{itemize}

Even though these cases are different, we will say \emph{by continuity} if we invoke any of them.
\end{definition}

Now we work towards:

\begin{lemma}\label{totally-cont-scale} If $\cf \kappa = \lambda > \omega$, $S \subset \lambda$ is stationary, and $\vec \kappa = \prod_{i \in S} \mu_i$ is a product of regular cardinals on $\kappa$ that carries a good scale, then it carries a totally continuous good scale.\end{lemma}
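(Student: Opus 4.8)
The plan is to build the totally continuous scale $\vec g$ by transfinite recursion on $\alpha<\kappa^+$, starting from a good scale $\vec f$ in which \emph{every} point of cofinality $>\lambda$ is good (available by \autoref{goodness-almost-everywhere}). At successor stages and at stages of cofinality $<\lambda$ the construction is routine: one simply interleaves so as to dominate all previously defined functions, and continuity at points $\alpha$ with $\cf\alpha<\lambda$ is automatic since the supremum $(\vec g\rest\alpha)_A$ can be matched by $g_\alpha$ on a tail (we have full freedom to \emph{define} $g_\alpha$, so set $g_\alpha =^* (\vec g\rest\alpha)_A$ for a chosen witnessing $A$, then check independence of $A$). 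The real content lives at the two limit cases $\cf\alpha=\lambda$ and $\cf\alpha>\lambda$.

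For points $\alpha$ with $\cf\alpha>\lambda$, I would follow the Cummings--Foreman--Magidor treatment directly. Because $\alpha$ is good and $f_\alpha$ is (arranged to be) an exact upper bound, \autoref{thinnedgoodness} lets me thin any cofinal $A$ to a goodness-witnessing $A^*$, and along $A^*$ the pointwise supremum $(\vec g\rest\alpha)_{A^*}$ agrees with the exact upper bound on a tail. The $=^*$ statement for \emph{all} goodness-witnessing $A$ then follows from the sandwich argument: two witnesses can be merged, and pointwise suprema along cofinal goodness witnesses of the same point agree modulo a bounded set of coordinates. So I define $g_\alpha$ to be this common exact upper bound (or a $=^*$-representative of it lying in $\vec\kappa$).

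The delicate case is $\cf\alpha=\lambda$, where continuity is phrased through the diagonal operator $\vec g^\Delta_A$ and the club-modulo relation $=^*_\Delta$. Here the coordinates are indexed by the stationary set $S\subseteq\lambda$, and for a club $A=\seq{\beta_i}{i<\lambda}\subset\alpha$ the target value at coordinate $i\in S$ is $\sup_{j<i} g_{\beta_j}(i)$. I would define $g_\alpha(i):=\sup_{j<i} g_{\beta_j}(i)$ for $i\in S$ along one fixed club $A$ (filling in coordinates off $S$ arbitrarily below $\mu_i$), after first checking this is a legitimate member of $\vec\kappa$, i.e.\ that each such supremum stays below $\mu_i$ — this uses that $\beta_j<\alpha$ for $j<i$ gives only $<i<\lambda\le\mu_i$ many functions below, so regularity of $\mu_i$ keeps the sup below $\mu_i$. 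I would also verify $g_\alpha$ is a strict $<^*$-upper bound for $\seq{g_\beta}{\beta<\alpha}$, again using the diagonal structure and that each $g_{\beta_j}<^* g_\alpha$.

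The main obstacle, and the step I would spend the most care on, is showing that the diagonal value is independent of the club $A$ modulo $=^*_\Delta$: if $A,A'$ are two clubs of order type $\lambda$ in $\alpha$, then $\vec g^\Delta_A =^*_\Delta \vec g^\Delta_{A'}$, so that the single value $g_\alpha$ genuinely witnesses continuity against \emph{every} club simultaneously. The strategy is to intersect $A\cap A'$ (still club in $\alpha$) and use a pressing-down or closure argument: for $i$ in the club of $\lambda$ where the enumerations of $A$ and $A'$ have ``caught up'' to each other below $\alpha$, the two diagonal suprema $\sup_{j<i}g_{\beta_j}(i)$ and $\sup_{j<i}g_{\beta'_j}(i)$ range over cofinally interleaved initial segments of $A$ and $A'$ and hence coincide. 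The subtlety is that agreement is only guaranteed on a club of coordinates $i$, which is exactly why the relation must be $=^*_\Delta$ rather than $=^*$; isolating that club and confirming it meets the recursion's requirements (compatibility with the earlier-defined $g_\beta$ and with goodness at higher-cofinality points above) is where the bookkeeping is most demanding.
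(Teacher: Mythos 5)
Your proposal is correct and follows essentially the same route as the paper: a recursion on $\alpha<\kappa^+$ interleaved with the given good scale, with $f_\alpha$ taken to be $\vec f_A$ for $\cf\alpha\ne\lambda$ and the diagonal supremum $\vec f^\Delta_A$ for $\cf\alpha=\lambda$, where well-definedness modulo the choice of $A$ is exactly the content of \autoref{littlegoodness}, \autoref{biggoodness}, and (for the delicate $\cf\alpha=\lambda$ case, via the same catch-up-club plus Fodor argument you sketch) \autoref{weirdgoodness}. The step you flag as the main obstacle is precisely the paper's \autoref{weirdgoodness}, so no gap.
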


Fix a $<^*$-increasing sequence $\vec f = \seq{f_\alpha}{\beta<\alpha}$ on a product $\prod_{i<\cf \kappa}\mu_i$. The following is straightforward:

\begin{proposition}\label{littlegoodness} Suppose $\alpha<\kappa^+$, $\cf \alpha < \cf \kappa$, $A,B \subset \alpha$ are unbounded and $\ot A =\ot B = \cf \alpha$. Then $\vec f_A =^\ast \vec f_B$.\end{proposition}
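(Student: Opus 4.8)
The plan is to prove the two inequalities $\vec f_A \le^* \vec f_B$ and $\vec f_B \le^* \vec f_A$ separately; by symmetry it suffices to establish the first, since the two together give $\vec f_A =^* \vec f_B$. The point of the argument will be that $\vec f_A$ and $\vec f_B$ are suprema of $\cf\alpha$-many functions, and that $\cf\alpha < \cf\kappa$ keeps all the relevant thresholds below the regular cardinal $\cf\kappa$.

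First I would fix the data. For each $\beta \in A$, since $B$ is unbounded in $\alpha$, choose some $\gamma_\beta \in B$ with $\gamma_\beta > \beta$. Because $\vec f$ is $<^*$-increasing we have $f_\beta <^* f_{\gamma_\beta}$, so there is a threshold $j_\beta < \cf\kappa$ such that $f_\beta(i) < f_{\gamma_\beta}(i) \le \vec f_B(i)$ for all $i \ge j_\beta$ (the last inequality just being the definition of $\vec f_B$ as a pointwise supremum over $B \ni \gamma_\beta$).

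The key step is the bound on thresholds. Since $\ot A = \cf\alpha$ we have $|A| = \cf\alpha$, and by hypothesis $\cf\alpha < \cf\kappa$; as $\cf\kappa$ is regular it follows that $j := \sup_{\beta \in A} j_\beta < \cf\kappa$. Then for every $i \ge j$ and every $\beta \in A$ we have $f_\beta(i) < \vec f_B(i)$, and taking the supremum over $\beta \in A$ yields $\vec f_A(i) = \sup_{\beta\in A} f_\beta(i) \le \vec f_B(i)$. Hence $\vec f_A \le^* \vec f_B$, and the symmetric argument gives $\vec f_B \le^* \vec f_A$, completing the proof.

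There is no genuine obstacle here; the only point to watch is that the whole argument rests on the assumption $\cf\alpha < \cf\kappa$, which is exactly what guarantees that the supremum of the $\cf\alpha$-many thresholds $j_\beta$ remains strictly below the regular cardinal $\cf\kappa$, so that the two functions agree on an honest tail of $\cf\kappa$ rather than merely cofinally. This bound fails precisely when $\cf\alpha = \cf\kappa$, which is why that case is handled separately in the paper through the diagonal suprema $\vec f^\Delta_A$ and the relation $=^*_\Delta$.
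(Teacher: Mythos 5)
Your proof is correct, and it is exactly the standard argument the paper has in mind when it labels this proposition ``straightforward'' and omits the proof: interleave $A$ and $B$, collect the $\cf\alpha$-many thresholds, and use regularity of $\cf\kappa > \cf\alpha$ to bound their supremum. Your closing remark correctly identifies why the case $\cf\alpha = \cf\kappa$ needs the separate $=^*_\Delta$ treatment.
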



\begin{proposition}\label{exactgoodness} If $\cf \alpha > \cf \kappa$ and $A \subset \alpha$ witnesses goodness, then $\vec f_A$ is an exact upper bound of $\seq{f_\beta}{\beta \in A}$.\end{proposition}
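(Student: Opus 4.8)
Recall that a function $h$ on $\vec\kappa$ is an \emph{exact upper bound} of a $<^*$-increasing family $\seq{f_\beta}{\beta \in A}$ if $f_\beta <^* h$ for every $\beta \in A$ and, whenever $g <^* h$, there is some $\beta \in A$ with $g <^* f_\beta$. The plan is to verify both clauses for $h = \vec f_A$ by reading them off the definition of goodness, the only real content being a regularity argument in the exactness clause. Throughout I would fix $j<\cf\kappa$ witnessing goodness of $A$, so that $\seq{f_\beta(i)}{\beta \in A}$ is strictly increasing for every $i \ge j$, and enumerate $A = \seq{\beta_\xi}{\xi<\tau}$ with $\tau = \cf\alpha$.

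The upper-bound clause should be immediate: for a fixed $\beta_\xi$ and any $i \ge j$, strict monotonicity of $\eta \mapsto f_{\beta_\eta}(i)$ together with the fact that $\tau$ is a limit ordinal gives $f_{\beta_\xi}(i) < \sup_{\eta<\tau}f_{\beta_\eta}(i) = \vec f_A(i)$, so $f_{\beta_\xi} <^* \vec f_A$. For exactness I would start from $g <^* \vec f_A$, fix $j_0 \ge j$ with $g(i)<\vec f_A(i)$ for all $i \ge j_0$, and for each coordinate $i \ge j_0$ pick $\xi_i<\tau$ with $g(i) < f_{\beta_{\xi_i}}(i)$, which is possible precisely because $g(i)$ lies below the supremum $\vec f_A(i)$. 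Setting $\xi^* := \sup_{i \ge j_0}\xi_i$ and using monotonicity in the index at each $i \ge j_0 \ge j$, I would then conclude $g(i) < f_{\beta_{\xi_i}}(i) \le f_{\beta_{\xi^*}}(i)$ for all $i \ge j_0$, that is $g <^* f_{\beta_{\xi^*}}$, which exhibits the required $\beta = \beta_{\xi^*} \in A$.

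The main obstacle---and the only place the hypothesis $\cf\alpha > \cf\kappa$ is used---is ensuring that $\xi^*$ does not exhaust $A$, i.e. that $\xi^*<\tau$. This is exactly where regularity enters: the product has only $|S| \le \cf\kappa < \tau$ many coordinates, so $\xi^*$ is a supremum of fewer than $\tau$ ordinals each below the regular cardinal $\tau = \cf\alpha$, and hence $\xi^* < \tau$. Everything else is bookkeeping with the definition of goodness; in particular no appeal to cofinality of $\vec f$ in the product is needed, so the same argument applies equally to pseudo-scales.
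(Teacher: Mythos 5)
Your proposal is correct and is essentially the paper's own argument: both verify the upper-bound clause directly from coordinatewise monotonicity and, for exactness, pick for each coordinate $i\ge j$ a witness in $A$ dominating $g(i)$ and then take the supremum of these $\cf\kappa$-many witnesses, using $\cf\alpha>\cf\kappa$ (with regularity of $\cf\alpha$) to keep the supremum inside $A$. Your write-up is slightly more explicit than the paper's (which leaves the upper-bound clause as ``straightforward'' and does not spell out why the supremum of the witnesses stays below $\alpha$), but there is no difference in method.
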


\begin{proof} It is straightforward that $\vec f_A$ is an upper bound. For exactness, suppose that $g<^\ast \vec  f_A$. Let $j < \lambda$ witness goodness with respect to $A$ as well as $g<^\ast \vec  f_A$, and for all $i$ with $j \le i < \lambda$, let $\beta_i \in A$ be such that $g(i) < f_{\beta_i}(i)$. If $\beta = \sup_{j \le i < \lambda} \beta_i$, then by goodness we have $g <^\ast f_\beta$.\end{proof}

\begin{remark} If $\cf \alpha \le \cf \kappa$, then $\seq{f_\beta}{\beta<\alpha}$ has no exact upper bound: Let $\seq{\beta_\xi}{\xi<\cf \alpha}$ be increasing and cofinal in $\alpha$ and let $\seq{S_\xi}{\xi<\cf \alpha}$ be a partition of $\cf \kappa$ into disjoint unbounded sets. Define $g$ such that $g(i) = f_{\beta_\xi}(i)$ if and only if $i \in S_\xi$. Then $g <^\ast f_\alpha$, but there is no $\beta<\alpha$ such that $g <^\ast f_\beta$.\end{remark}

\begin{proposition}\label{accurategoodness} If $\cf \alpha > \cf \kappa$, $A \subset \alpha$ witnesses goodness of $\alpha$, and $A' \subset A$ is unbounded in $\alpha$, then $\vec f_A =^\ast \vec f_{A'}$.\end{proposition}

\begin{proof} It is immediate that $\vec f_{A'} \le^\ast \vec f_A$. Suppose for contradiction that $\vec  f_{A'} <^\ast \vec  f_A$ as witnessed by $j<\lambda$. Assume that $j$ is also large enough to witnesses goodness with respect to $A$, which implies that it witnesses goodness with respect to $A'$ as well. Then for all $i$ with $j \le i < \lambda$, there is some $\beta_i \in A$ such that $\vec  f_{A'}(i) < f_{\beta_i}(i) < \vec  f_A(i)$. Let $\beta$ be an element of $A'$ greater or equal to $\sup_{j \le i < \lambda}\beta_i < \alpha$. By goodness of $A'$, $i \ge j$ implies that $f_{\beta_i}(i) \le f_\beta(i)$, and so we have $f_\beta(i) \le \vec  f_{A'}(i) < f_\beta(i)$, a contradiction.\end{proof}

\begin{proposition}\label{biggoodness} Suppose $\alpha < \kappa^+$,  $\cf \alpha > \cf \kappa$ and $A,B \subset \alpha$ both witness goodness of $\alpha$. Then $\vec f_A  =^\ast \vec f_B$.\end{proposition}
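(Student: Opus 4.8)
The plan is to reduce the statement to the uniqueness of exact upper bounds modulo bounded sets. First I would observe that, since $A$ is cofinal in $\alpha$ and $\vec f$ is $<^\ast$-increasing, a function $g$ is a $\le^\ast$-upper bound of $\seq{f_\beta}{\beta \in A}$ if and only if it is a $\le^\ast$-upper bound of the whole sequence $\seq{f_\beta}{\beta<\alpha}$: given any $\gamma<\alpha$, cofinality yields some $\beta \in A$ with $\gamma<\beta$, whence $f_\gamma <^\ast f_\beta \le^\ast g$. Consequently the exact upper bounds of the two sequences coincide. By \autoref{exactgoodness}, $\vec f_A$ is an exact upper bound of $\seq{f_\beta}{\beta \in A}$, and therefore of $\seq{f_\beta}{\beta<\alpha}$; the same argument applied to $B$ shows that $\vec f_B$ is also an exact upper bound of $\seq{f_\beta}{\beta<\alpha}$.

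It then suffices to show that exact upper bounds of a fixed $<^\ast$-increasing sequence are unique modulo bounded subsets of $\lambda$. Given the two bounds $\vec f_A$ and $\vec f_B$, I would argue by contradiction: if $\vec f_A \ne^\ast \vec f_B$, then by symmetry the set $T = \{i : \vec f_B(i) < \vec f_A(i)\}$ is unbounded in $\lambda$. Define $h$ by $h(i) = \vec f_B(i)$ for $i \in T$ and $h(i)=0$ otherwise, so that $h <^\ast \vec f_A$ (using that $\vec f_A(i)>0$ for large $i$ by goodness). Exactness of $\vec f_A$ gives a single $\beta<\alpha$ with $h <^\ast f_\beta$, but then for cofinally many $i \in T$ we would have $\vec f_B(i) = h(i) < f_\beta(i) \le \vec f_B(i)$, since $f_\beta \le^\ast \vec f_B$; this is a contradiction. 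Hence $\vec f_A =^\ast \vec f_B$.

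Alternatively, one can give a self-contained argument mirroring the sandwich of \autoref{thinnedgoodness}: interleave cofinal enumerations $\seq{\alpha_\xi}{\xi<\cf\alpha}\subseteq A$ and $\seq{\beta_\xi}{\xi<\cf\alpha}\subseteq B$ so that $f_{\alpha_\xi} \le^\ast f_{\beta_\xi} <^\ast f_{\alpha_{\xi+1}}$, pass to an unbounded $X \subseteq \cf\alpha$ on which a single $j<\lambda$ witnesses all the relevant pointwise inequalities, and take suprema over $X$ to conclude $\vec f_{A''} =^\ast \vec f_{B''}$ for the induced unbounded subsets $A'' \subseteq A$, $B'' \subseteq B$; \autoref{accurategoodness} then upgrades this to $\vec f_A =^\ast \vec f_B$.

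The main obstacle in either route is the same point where goodness is essential: controlling a uniform threshold $j<\lambda$ beyond which comparisons hold for all, or cofinally many, indices simultaneously, so that suprema can be compared pointwise. In the exact-upper-bound route this is already packaged into \autoref{exactgoodness}, so the only remaining work is verifying uniqueness of exact upper bounds; in the sandwich route it reappears explicitly as the pigeonhole step producing $X$, exactly as in \autoref{thinnedgoodness}.
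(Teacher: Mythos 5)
Your argument is correct, and your primary route is genuinely different from the paper's. The paper proves this proposition by running the Sandwich Argument of \autoref{thinnedgoodness} on $A$ and $B$ to produce interleaved unbounded subsets $A' \subset A$, $B' \subset B$ with $\vec f_{A'} =^\ast \vec f_{B'}$, and then invoking \autoref{accurategoodness} twice --- which is exactly your ``alternative'' second route. Your main route instead observes that $\vec f_A$ and $\vec f_B$ are both exact upper bounds of the full sequence $\seq{f_\beta}{\beta<\alpha}$ (via \autoref{exactgoodness} and cofinality of $A$, $B$ in $\alpha$) and then proves uniqueness of exact upper bounds modulo bounded sets; the uniqueness argument is sound, including the small point that $\vec f_A(i)>0$ for $i$ past the goodness threshold so that $h <^\ast \vec f_A$. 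What this buys you is modularity: once \autoref{exactgoodness} is in hand, the proposition is a corollary of a completely general pcf fact that makes no further reference to goodness or to the combinatorics of $A$ and $B$; the cost is that you silently re-prove a standard lemma (eub uniqueness) that the paper avoids by recycling \autoref{thinnedgoodness} and \autoref{accurategoodness}, which it needs anyway. Either proof is acceptable as written.
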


\begin{proof} Assume that $j$ is large enough to witness goodness with respect to both $A$ and $B$. Use the Sandwich Argument from \autoref{thinnedgoodness} to find $A' \subset A$ and $B' \subset B$ such that $\vec f_{A'} =^\ast \vec f_{B'}$. Our result then follows from \autoref{accurategoodness}.\end{proof}

\begin{proposition}\label{weirdgoodness} Suppose $\alpha < \kappa^+$, $\cf \alpha = \cf \kappa$, and $C,D$ are both clubs in $\alpha$ such that $\ot C = \ot D = \cf \alpha$. Then $f^\Delta_C =^*_\Delta f_D^\Delta$.\end{proposition}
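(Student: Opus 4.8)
The plan is to unwind the definition of $=^*_\Delta$ and produce a single club $E \subseteq \lambda$ on which the two diagonal suprema agree pointwise. Fix the increasing enumerations $C = \seq{c_j}{j<\lambda}$ and $D = \seq{d_j}{j<\lambda}$, so that by definition $f^\Delta_C(i) = \sup_{j<i} f_{c_j}(i)$ and $f^\Delta_D(i) = \sup_{j<i}f_{d_j}(i)$ for $i \in S$. By symmetry it suffices to find a club $E \subseteq \lambda$ such that $f^\Delta_C(i) \le f^\Delta_D(i)$ for every $i \in E \cap S$; intersecting $E$ with the analogous club for the reverse inequality then yields the required club witnessing $f^\Delta_C =^*_\Delta f^\Delta_D$.

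The heart of the argument is a closure/diagonalization step that absorbs the $<^\ast$-thresholds into the club. For each $j<\lambda$ let $\phi(j)$ be the least $j'$ with $c_j \le d_{j'}$ (which exists since $D$ is cofinal in $\alpha$), and let $t(j)<\lambda$ be a threshold witnessing $f_{c_j} \le^* f_{d_{\phi(j)}}$, i.e.\ $f_{c_j}(i) \le f_{d_{\phi(j)}}(i)$ for all $i \in S$ with $i \ge t(j)$. Define $\Phi(j) = \max(\phi(j), t(j)) + 1$ and let $E$ be the club of limit ordinals $i<\lambda$ closed under $\Phi$. The point is that for such an $i$ and any $j<i$ we simultaneously get $\phi(j)<i$ and $i > t(j)$.

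Then for $i \in E \cap S$ and any $j<i$, setting $j' = \phi(j)<i$ we have $c_j \le d_{j'}$ and $i \ge t(j)$, hence $f_{c_j}(i) \le f_{d_{j'}}(i) \le f^\Delta_D(i)$; taking the supremum over $j<i$ gives $f^\Delta_C(i) \le f^\Delta_D(i)$. Running the symmetric construction with the roles of $C$ and $D$ reversed and intersecting the two clubs completes the proof. The main obstacle---and the reason this is more delicate than \autoref{accurategoodness}---is precisely that the sequence is only $<^\ast$-increasing rather than everywhere increasing, so each comparison $c_j \le d_{\phi(j)}$ only yields domination above a threshold $t(j)$; the device of folding the $\lambda$-many thresholds $t(j)$ into the closure condition defining $E$ is what makes the diagonal supremum behave well, and it crucially uses that $\cf \kappa = \lambda$ is a regular uncountable cardinal so that the relevant sets are genuinely club.
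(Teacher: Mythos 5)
Your proof is correct, but it takes a different route from the paper's. You argue directly: you sandwich $C$ into $D$ via the map $\phi$, collect the $\lambda$-many $<^\ast$-thresholds $t(j)$, and fold everything into the club $E$ of closure points of $\Phi$, obtaining $f^\Delta_C(i)\le f^\Delta_D(i)$ outright on $E\cap S$ and then finishing by symmetry. The paper instead argues by contradiction: it assumes the set $\{i : f^\Delta_C(i)<f^\Delta_D(i)\}$ is stationary, passes to a club on which the diagonal sequence $\seq{f_{\gamma_j}(i)}{j<i}$ is strictly increasing (so that the diagonal supremum is approached from below by individual $f_{\gamma_j}(i)$'s), applies Fodor's Lemma to freeze a single index $k$ with $f^\Delta_C(i)<f_{\gamma_k}(i)$ on a stationary set, and then contradicts this by dominating $f_{\gamma_k}$ with some $f_{\beta_\ell}$ for $\beta_\ell\in C$ above $\gamma_k$. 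Both arguments hinge on the same underlying device---$\lambda$ is regular uncountable, so $\lambda$-many $<^\ast$-witnesses can be absorbed into a club of closure points---but your version dispenses with Fodor and with the contradiction scaffolding, and it has the minor advantage of explicitly exhibiting the club witnessing $=^*_\Delta$ rather than only showing the difference set is non-stationary. The one point worth stating explicitly (you use it implicitly) is that $c_j\le d_{\phi(j)}$ gives $f_{c_j}\le^* f_{d_{\phi(j)}}$ by splitting into the cases $c_j=d_{\phi(j)}$ (equality) and $c_j<d_{\phi(j)}$ ($<^*$ from the increasing sequence); with that noted, the argument is complete.
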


\begin{proof} Suppose otherwise. Enumerate $C = \seq{\beta_i}{i<\cf \kappa}$ and $D = \seq{\gamma_i}{i<\cf \kappa}$. Then without loss of generality, $\{i< \cf \kappa : \vec f^\Delta_C(i) < \vec f^\Delta_D(i)\}$ is stationary in $\cf \kappa$. Let $E$ be the club $\{i < \cf \kappa: \forall j_1,j_2 < i, \exists j^* < i \text{ witnessing } f_{\gamma_{j_1}} <^* f_{\gamma_{j_2}}\}$. Observe that if $i \in \lim E$, then $\seq{f_{\gamma_j}(i)}{j<i}$ is strictly increasing, so for all $\delta < \sup_{j<i}f_{\gamma_j}(i)$, there is some $j'<i$ such that $\delta < f_{\gamma_{j'}}(i)$. Let $S := \lim E \cap \{i< \cf \kappa : \vec f^\Delta_C(i) < \vec f^\Delta_D(i)\}$.

Then for all $i \in S$, there is some $j<i$ such that $\vec f^\Delta_C(i) < f_{\gamma_j}(i)$. By Fodor's Lemma, there is a stationary $T \subset S$ and some $k < \cf \kappa$ such that for all $i \in  T$, $\vec f_C^\Delta(i) < f_{\gamma_k}(i)$. If $\ell$ is large enough that $\gamma_k < \beta_\ell$, then there is some $m$ such that for all $i \ge m$, $f_{\gamma_k}(i) < f_{\beta_\ell}(i)$. If $i > m, \ell$, then $f_{\gamma_k}(i) < f_{\beta_\ell}(i) \le \vec f_C^\Delta(i)$. But $T$ is of course unbounded, so this implies that we can find an $i$ such that $f_{\gamma_k}(i) <  \vec f_C^\Delta(i)  < f_{\gamma_k}(i)$, a contradiction.\end{proof}




\begin{proof}[Proof of \autoref{totally-cont-scale}]We are working with a product $\vec \kappa := \prod_{i<\lambda}\mu_i$. Let $\vec g = \seq{g_\alpha }{ \alpha < \kappa^+}$ be a good scale on this product. Then we define a totally continuous scale $\vec f = \seq{f_\alpha}{\alpha<\kappa^+}$ by induction as follows using the propositions from this section: If $\alpha = \beta+1$, choose $\gamma<\kappa^+$ large enough that $f_\beta <^* g_\gamma$. Then let $f_\alpha$ be such that $g_\gamma <^* f_\alpha$. If $\alpha$ is a limit and $\cf \alpha < \lambda$, choose any $A$, a cofinal subset of $\alpha$ of order-type $\cf \alpha$. Then let $f_\alpha := \vec f_A$. (\autoref{littlegoodness}.) If $\alpha$ is a limit and $\cf \alpha = \lambda$, choose $A$ to be any club subset of $\alpha$ of order-type $\cf \alpha$. Then let $f_\alpha := \vec f^\Delta_A$. (\autoref{weirdgoodness}.) Lastly, suppose $\alpha$ is a limit and $\cf \alpha > \lambda$. Then $\alpha$ is a good point in terms of $\seq{f_\beta}{\beta < \alpha}$ because it is cofinally interleaved with $\seq{g_\beta}{\beta < \alpha}$. Hence we can choose any cofinal $A \subset \alpha$ and let $f_\alpha := \vec f_A$. (\autoref{exactgoodness} and \autoref{biggoodness}.) \end{proof}

\subsection{The Construction for Weak Square}\label{sec-weaksq}

Commencing with the proof of \autoref{terse-weak-sq}, fix a singular $\kappa$ with cofinality $\lambda>\omega$ such that $S^* := \{\delta<\kappa: \square^*_\delta \textup{ holds}\}$ is stationary (and of order-type $\lambda$). It will be sufficient to assume that for all $\tau<\kappa$, $\tau^\lambda<\kappa$, and to assume that $\prod_{\delta \in S^*} \delta^+$ carries a good pseudo-scale.

An easy argument using Fodor's Lemma yields:

\begin{proposition}\label{nice-club} There is a club $E \subset \kappa$ consisting of singular cardinals.\end{proposition}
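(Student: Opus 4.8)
The plan is to build the club $E$ directly from a continuous cofinal sequence and to read off singularity from the cofinalities of its closure points; Fodor's Lemma is then only needed, if at all, to rephrase the conclusion as the nonstationarity of the regular cardinals below $\kappa$. First I would fix a strictly increasing continuous sequence $\seq{\kappa_i}{i<\lambda}$ of cardinals, cofinal in $\kappa$, with $\kappa_0>\lambda$; such a sequence exists because $\kappa$ is a limit cardinal with $\cf\kappa=\lambda$. Then I set $E:=\{\kappa_i : i<\lambda \text{ a limit ordinal}\}$, the set of limit points of the range of the sequence, and I claim $E$ is as required.

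Verifying the club conditions is routine. Unboundedness of $E$ in $\kappa$ is immediate since the $\kappa_i$ are cofinal. For closure, if $\gamma<\kappa$ is a limit point of $E$, then $\gamma=\sup\{\kappa_i : i\in A\}$ where $A:=\{i<\lambda \text{ limit} : \kappa_i<\gamma\}$; writing $i^*:=\sup A$, continuity of the sequence gives $\gamma=\kappa_{i^*}$, and since $A$ is a cofinal set of limit ordinals, $i^*$ is again a limit ordinal, whence $\gamma\in E$.

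The only point with any content is that each element of $E$ is a singular cardinal, and this is where I would be careful. For a limit ordinal $i<\lambda$, the value $\kappa_i=\sup_{j<i}\kappa_j$ is a supremum of cardinals, hence a limit cardinal, and $\cf\kappa_i=\cf i\le i<\lambda<\kappa_0\le\kappa_i$, so $\kappa_i$ is singular. I expect the main (mild) obstacle to be the temptation to take $E$ to be the club of all limit cardinals below $\kappa$: that set can contain weakly inaccessible cardinals, which are regular, so it need not consist of singular cardinals. Restricting to the closure points of a fixed cofinal sequence avoids this, since such points carry the small cofinality $\cf i<\lambda$ of their index and thus cannot be regular; equivalently, the club $E$ is disjoint from the regular cardinals below $\kappa$, witnessing that the latter form a nonstationary set.
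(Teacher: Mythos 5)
Your proof is correct and complete. The paper does not actually write out an argument here --- it dismisses the proposition with the single remark that ``an easy argument using Fodor's Lemma yields'' it --- whereas you give a direct construction: take a strictly increasing continuous cofinal sequence $\seq{\kappa_i}{i<\lambda}$ of cardinals with $\kappa_0>\lambda$, let $E$ be its set of limit points, and observe that each such point $\kappa_i$ ($i$ limit) satisfies $\cf\kappa_i=\cf i\le i<\lambda<\kappa_0\le\kappa_i$ and is therefore a singular (limit) cardinal. All the verifications you give are sound, including the closure computation via $i^*=\sup A$ and the warning that the club of \emph{all} limit cardinals below $\kappa$ would not work because of possible weakly inaccessibles. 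The only real difference from the paper is methodological: you show that Fodor's Lemma is not needed at all, since the pressing-down idea is replaced by the elementary fact that closure points of a continuous sequence of length $\lambda$ inherit cofinality $\cf i<\lambda$. This is, if anything, cleaner than the route the paper gestures at (Fodor's Lemma for stationary subsets of a \emph{singular} $\kappa$ would in any case have to be routed through a cofinal sequence and stationarity in $\lambda$, which is exactly the structure you exploit directly). Note also that your $E$, being the trace of the club $\lim(\lambda)$ under the continuous sequence, is precisely the kind of club the paper immediately uses afterwards to define $\seq{\kappa_i}{i<\lambda}$ and the stationary set $S$, so your construction dovetails with the subsequent argument.
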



Using \autoref{nice-club}, let $\seq{\kappa_i}{i<\lambda}$ be a continuous, cofinal, and strictly increasing sequence of singular cardinals in $\kappa$. It follows that $S:= \{i<\lambda: \kappa_i \in \lim(S^*)\}$ is stationary in $\lambda$. Note that $\prod_{i \in S}\kappa_i^+$ also carries a good pseudo-scale, so we can use \autoref{totally-cont-scale} to find a totally continuous pseudo-scale $\vec f =\seq{f_\alpha}{\alpha < \kappa^+}$ on the same product.

Let $\vec{\mathcal C}_i = \seq{\mathcal{C}_\xi^i}{\xi<\kappa_i^+}$ witness $\square^*_{\kappa_i}$ for all $i \in S$. Since $\kappa_i$ is a limit cardinal for all $i$, we can assume that for all such $i$ these $\square^*_{\kappa_i}$-sequences have the property that $\ot C < \kappa_i$ for all $C \in \mathcal C_\xi^i$, $\xi<\kappa_i^+$ (see \cite{Cummings2005}). If $\alpha < \kappa^+$, we define $\mathcal F_\alpha$ as follows:

\begin{itemize}

\item If $\cf(\alpha) \ne \lambda$, we let $\mathcal{F}_\alpha$ be the set of functions $F$ such that $\dom F = S$ and such that $\forall i \in S$, $F(i) \in \mathcal{C}_{f_\alpha(i)}^i$.

\item If $\cf(\alpha) = \lambda$, we  let $\mathcal{F}_\alpha$ be the set of functions $F$ such that $\dom F = S$ and such that \emph{for some} $h =^*_\Delta f_\alpha$, $\forall i \in S$, $F(i) \in \mathcal{C}_{h(i)}^i$.

\end{itemize}

Regardless of whether or not $\cf(\alpha)=\lambda$, we will say that some $h \in \prod_{i \in S}\kappa_i^+$ \emph{witnesses} $F \in \mathcal{F}_\alpha$ if for all $i \in S$, $F(i) \in \mathcal{C}_{h(i)}^i$.

For each $\alpha < \kappa^+$ and $F \in \mathcal F_\alpha$, we define $C_F \subset \alpha$ as follows:

\begin{itemize}

\item If $\beta < \alpha$ and $\cf \beta \ne \lambda$, then $\beta \in C_F$ if and only if there is some $j<\lambda$ such that for all $i \in S \setminus j$, $f_\beta(i) \in \lim F(i)$.

\item If $\beta < \alpha$ and $\cf \beta = \lambda$, then $\beta \in C_F$ if and only if the set of limit ordinals $\gamma \in C_F$ with $\cf(\gamma)<\lambda$ is unbounded in $\beta$.


\end{itemize}

Now we define our $\square_\kappa^*$-sequence at $\alpha$ depending on the cofinality:

\begin{itemize}
\item If $\cf \alpha < \lambda$, then $\mathcal C_\alpha := \{C_F: F \in \mathcal F_\alpha \text{ and }C_F \text{ is unbounded in }\alpha \} \cup \{C \subset \alpha : C \text{ is a club in }\alpha\text{ and }\ot C < \lambda\}$.
\item If $\cf \alpha = \lambda$, choose a club $C \subset \alpha$ such that $\ot C = \lambda$ and let $\mathcal C_\alpha := \{C_F: F \in \mathcal F_\alpha  \text{ and }C_F \text{ is unbounded in }\alpha \} \cup \{C\}$.
\item If $\cf \alpha > \lambda$, let $\mathcal C_\alpha := \{C_F: F \in \mathcal F_\alpha\}$.
\end{itemize}

\begin{lemma}\label{weak-sq-closure} For all $\alpha \in \lim(\kappa^+)$ and $C \in \mathcal{C}_\alpha$, $C$ is closed.\end{lemma}

\begin{proof} It is enough to show that for all $\alpha \in \lim(\kappa^+)$ and $F \in \mathcal F_\alpha$, $C_F$ is closed. The proof of this lemma does not depend on whether or not $\cf(\alpha) = \lambda$; that is, it does not depend on whether $F \in \mathcal{F}_\alpha$ is witnessed specifically by $f_\alpha$ or some $h =^\ast_\Delta f_\alpha$. Let $\seq{\beta_\xi}{\xi<\tau} \s C_F$ be a strictly increasing sequence with supremum $\beta<\alpha$ where $\tau$ is regular. For each $\xi<\tau$, let $j_\xi $ witness that $\beta_\xi \in C_F$, i.e. for all $i \ge j$, $f_{\beta_\xi}(i) \in \lim F(i)$.

\emph{Case 1:} $\tau <\lambda$. If $j'= \sup_{\xi<\tau}j_\xi$, then for all $i \ge j'$, we have $\sup_{\xi<\tau} f_{\beta_\xi}(i) \in \lim F(i)$. By continuity, there is also some $j''$ such that for all $i \ge j''$, $f_\beta(i) = \sup_{\xi<\tau} f_{\beta_\xi}(i)$. Hence, if $j$ is larger than $j'$ and $j''$, then $j$ witnesses that $\beta \in C_F$ by closure of $F(i)$ for $i \in S$.

\emph{Case 2:} $\tau > \lambda$. By the Pigeonhole Principle there is some unbounded $Z \subset \tau$ and some $j'<\lambda$ such that $j_\xi = j'$ for all $\xi \in Z$. By \autoref{thinnedgoodness}, there is some $j''$ and some $Z' \subset Z$ such that $\{\beta_\xi : \xi \in Z'\}$ and $j''$ witness goodness. It then follows by continuity that for all $i \ge j''$, $f_\beta(i) = \sup_{\xi \in Z'}f_{\beta_\xi}(i)$. If $j \ge j',j''$, then $j$ witnesses that $\beta \in C_F$ as in the previous case.

\emph{Case 3:} $\tau = \lambda$. By Case 2, we can assume that $\cf(\beta_\xi) < \lambda$ for all $\xi<\lambda$. Then closure follows by definition. \end{proof}



\begin{lemma}\label{unbounded} For all $\alpha \in \lim(\kappa^+)$, $C \in \mathcal C_\alpha$ is unbounded in $\alpha$.\end{lemma}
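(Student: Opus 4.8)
The plan is to split on the cofinality of $\alpha$. When $\cf\alpha \le \lambda$ the claim is immediate from the construction of $\mathcal C_\alpha$: every member is either a $C_F$ that was thrown in only when it is unbounded, or one of the auxiliary sets (a club $C$ of order type $<\lambda$ when $\cf\alpha<\lambda$, or the distinguished club $C$ when $\cf\alpha=\lambda$), and these are unbounded by fiat. So the whole content sits in the case $\cf\alpha>\lambda$, where $\mathcal C_\alpha=\{C_F:F\in\mathcal F_\alpha\}$ carries no unboundedness filter and I must show directly that each $C_F$ is unbounded in $\alpha$.

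Fix such an $\alpha$, an $F\in\mathcal F_\alpha$, and a target $\beta_0<\alpha$; the goal is to produce $\beta\in C_F$ with $\beta_0<\beta<\alpha$. Since $\cf\alpha>\lambda=\cf\kappa$, total continuity makes $\alpha$ a good point with $f_\alpha$ an exact upper bound; fix a cofinal $A=\seq{a_\xi}{\xi<\cf\alpha}\subset\alpha$ witnessing goodness, with witness $j_0<\lambda$, and, by continuity, $\sup_{\xi<\cf\alpha}f_{a_\xi}(i)=f_\alpha(i)$ for all large $i$. For such $i$ the sequence $\seq{f_{a_\xi}(i)}{\xi<\cf\alpha}$ is strictly increasing of regular length $\cf\alpha$ with supremum $f_\alpha(i)$, so $\cf(f_\alpha(i))=\cf\alpha$ and every proper initial supremum $\sup_{\xi<\eta}f_{a_\xi}(i)$ (for $\eta<\cf\alpha$) stays strictly below $f_\alpha(i)$.

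The key step is to intersect clubs pulled back along each coordinate. For each large $i\in S$ put $E_i=\{\eta<\cf\alpha:\eta\text{ limit and }\sup_{\xi<\eta}f_{a_\xi}(i)\in\lim F(i)\}$. Because $F(i)$ is a club in $f_\alpha(i)$ while $\{f_{a_\xi}(i):\xi<\cf\alpha\}$ is cofinal in $f_\alpha(i)$, a routine back-and-forth (alternately climbing above a point of $F(i)$ and above the next $f_{a_\xi}(i)$) shows $E_i$ is unbounded, and closure is immediate because $\lim F(i)$ is itself club in $f_\alpha(i)$ and the relevant suprema stay below $f_\alpha(i)$. Thus each $E_i$ is club in $\cf\alpha$. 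As $|S|\le\lambda<\cf\alpha$ and $\cf\alpha$ is regular, $E^*:=\bigcap_i E_i$ is again club in $\cf\alpha$, and for every $\eta\in E^*$ we get $\sup_{\xi<\eta}f_{a_\xi}(i)\in\lim F(i)$ simultaneously for all large $i$.

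Finally, I would choose $\eta\in E^*$ with $\cf\eta=\omega$ (possible since $E^*$ is club in the regular cardinal $\cf\alpha>\lambda>\omega$) and large enough that $\beta:=\sup_{\xi<\eta}a_\xi>\beta_0$; note $\beta<\alpha$ and $\cf\beta=\omega\ne\lambda$. Taking a cofinal $\omega$-subsequence of $\{a_\xi:\xi<\eta\}$ and invoking continuity in the clause $\cf\beta<\cf\kappa$ gives $f_\beta(i)=\sup_{\xi<\eta}f_{a_\xi}(i)$ for all large $i$, whence $f_\beta(i)\in\lim F(i)$ for all large $i\in S$; this is exactly the condition placing $\beta$ in $C_F$. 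Letting $\eta$ range over the unbounded set $E^*$ produces such $\beta$ cofinally in $\alpha$. The main obstacle, and the reason the argument truly needs $\cf\alpha>\lambda$, is the tension between two ``for large $i$'' quantifiers: the pointwise continuity equations hold only eventually in $i$ and the exact threshold depends on the chosen point, so one must first align all coordinates simultaneously through the intersection $E^*$ (using $\cf\alpha>\lambda\ge|S|$) and only then descend to a single low-cofinality $\eta$ at which scale continuity can be applied.
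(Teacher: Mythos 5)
Your proof is correct, but it takes a genuinely different route from the paper's. The paper's argument does not fix a good cofinal matrix in $\alpha$ at all: given $\bar\alpha<\alpha$, it runs a single recursion of length $\omega$, alternately choosing $g_{n+1}(i)\in F(i)$ above $f_{\alpha_n}(i)$ and then invoking the fact that $f_\alpha$ is an \emph{exact} upper bound of $\seq{f_\beta}{\beta<\alpha}$ to climb back into $\alpha$ with some $\alpha_{n+1}$ satisfying $g_{n+1}<^*f_{\alpha_{n+1}}$; the point $\beta=\sup_n\alpha_n$ then has cofinality $\omega$ and lies in $C_F$ by continuity, the single uniform threshold $\sup_n j_n<\lambda$ being available because $\lambda>\omega$. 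You instead lean on the goodness clause of total continuity: you fix an increasing matrix $\seq{f_{a_\xi}(i)}{\xi<\cf\alpha}$ converging to $f_\alpha$, perform the interleaving with $F(i)$ coordinate-by-coordinate to obtain clubs $E_i\subseteq\cf\alpha$, and then use $\cf\alpha>\lambda\geq|S|$ to intersect them before descending to an $\eta$ of cofinality $\omega$. Both routes are sound and both ultimately produce a point of countable cofinality in $C_F$; the exactness-based recursion is shorter and produces one witness at a time, whereas your club intersection yields club-many witnessing indices at once and makes explicit exactly where $\cf\alpha>\lambda$ enters (aligning the $\lambda$-many ``for large $i$'' thresholds), a difficulty the paper sidesteps by letting the exactness step do the alignment globally. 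Your dispatch of the cases $\cf\alpha\leq\lambda$ by appeal to the definition of $\mathcal C_\alpha$ matches the paper's opening reduction.
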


\begin{proof} It is sufficient to show that if $\cf \alpha > \lambda$, then $C_F$ is unbounded in $\alpha$ for an arbitrary $F \in \mathcal F_\alpha$. We will use the fact that $F \in \mathcal{F}_\alpha$ can only be witnessed by $f_\alpha$. Consider some $\bar \alpha<\alpha$. We will find an element of $C_F$ larger than $\bar \alpha$. By induction we define a sequence of ordinals $\seq{\alpha_n}{n<\omega}$ in the interval $(\bar \alpha,\alpha)$, a $<^\ast$-increasing sequence of functions $\seq{g_n}{n<\omega}$ in $\prod_{i \in S}\kappa_i^+$, and an undirected list of ordinals $\seq{j_n}{n<\omega}$ in $\lambda$.




Let $\alpha_0 \in (\bar{\alpha},\alpha)$ and $g_0 <^* f_\alpha$ be arbitrary. Suppose that $\alpha_n$ and $g_n$ are defined. Let $g_{n+1}$ be defined so that for all $i<\lambda$, $g_{n+1}(i)$ is an element of $F(i)$ larger than $f_{\alpha_n}(i)$. Using the facts that $g_{n+1}<^\ast f_\alpha$ and that $f_\alpha$ is an exact upper bound of $\seq{f_\beta}{\beta<\alpha}$, find $\alpha_{n+1}$ so that $g_{n+1} <^\ast f_{\alpha_{n+1}}$, and let $j_{n+1}<\lambda$ witness this.

Let $\beta = \sup_{n<\omega}\alpha_n$, which in particular is larger than $\bar \alpha$. We claim that $\beta \in C_F$ as witnessed by $j:=\sup_{n<\omega}j_n<\lambda$. For each $i<\lambda$ such that $i \ge j$, $\seq{g_n(i)}{i<\omega}$ and $\seq{f_{\alpha_n}(i)}{n<\omega}$ interleave each other, so $\sup_{n<\omega}f_{\alpha_n}(i) \in \lim F(i)$ for such $i$. For sufficiently large $i$, $f_{\beta}(i) = \sup_{n<\omega}f_{\alpha_n}(i)$ by continuity, so this completes the proof.\end{proof}

\begin{lemma}\label{coherence} For all $\alpha \in \lim(\kappa^+)$ and $C \in \mathcal C_\alpha$, if $\beta \in \lim C$, then $C \cap \beta \in \mathcal C_\beta$.\end{lemma}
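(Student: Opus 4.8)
The plan is to reduce to the case $C = C_F$ and then realize $C_F \cap \beta$ as $C_{F'}$ for a suitable coordinatewise restriction $F' \in \mathcal F_\beta$ of $F$, built from the coherence of the sequences $\vec{\mathcal C}_i$. First observe that if $C$ is instead one of the auxiliary clubs adjoined to $\mathcal C_\alpha$ (of order type $<\lambda$ when $\cf \alpha < \lambda$, or the single fixed club when $\cf \alpha = \lambda$), then $C \cap \beta$ is a club of order type $<\lambda$, so $\cf \beta < \lambda$ and $C \cap \beta$ already belongs to $\mathcal C_\beta$ as an auxiliary club; this case is immediate. So assume $C = C_F$ with $F \in \mathcal F_\alpha$. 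By \autoref{weak-sq-closure} the set $C_F$ is closed, and it is unbounded in $\alpha$, so $\beta \in \lim C_F$ forces $\beta \in C_F$.

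When $\cf \beta \ne \lambda$, membership $\beta \in C_F$ hands us some $j<\lambda$ with $f_\beta(i) \in \lim F(i)$ for all $i \in S \setminus j$. I would set $F'(i) := F(i) \cap f_\beta(i)$ for $i \ge j$ — which lies in $\mathcal C^i_{f_\beta(i)}$ precisely by the coherence of the $\square^*_{\kappa_i}$-sequence $\vec{\mathcal C}_i$ — and let $F'(i)$ be an arbitrary member of $\mathcal C^i_{f_\beta(i)}$ for $i < j$, so that $F'$ is witnessed by $f_\beta$ and hence $F' \in \mathcal F_\beta$. To see $C_{F'} = C_F \cap \beta$, note that for $\gamma < \beta$ we have $f_\gamma <^* f_\beta$, so for all large $i$ both $f_\gamma(i) < f_\beta(i)$ and the comparison $f_\gamma(i) \in \lim F'(i) \iff f_\gamma(i) \in \lim F(i)$ hold; the cofinality-$<\lambda$ coordinates then determine the cofinality-$\lambda$ ones in both sets, so the two sets agree below $\beta$. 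Since $C_F \cap \beta$ is unbounded in $\beta$, it lies in $\mathcal C_\beta$.

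The cofinality-$\lambda$ case is the real obstacle, because $\beta \in C_F$ now only tells us that the set $D$ of limit points of $C_F \cap \beta$ of cofinality $<\lambda$ is unbounded in $\beta$, rather than that $f_\beta(i) \in \lim F(i)$ on a tail of coordinates. To recover the latter on a club, I would invoke total continuity (\autoref{totally-cont-scale}): since $C_F$ is closed, a continuous increasing $\lambda$-sequence through $D$ closes up to a club $A \subseteq C_F \cap \beta$ of order type $\lambda$ all of whose points lie in $D$, whence $f_\beta =^*_\Delta (\vec f \rest \beta)^\Delta_A$. A pressing-down and closure argument in the style of \autoref{weirdgoodness} — using \autoref{thinnedgoodness} to arrange that the relevant partial sups are strictly increasing — then shows that for club-many $i$ the value $f_\beta(i) = \sup_{\xi<i} f_{\delta_\xi}(i)$ is a genuine limit point of $F(i)$, since each $f_{\delta_\xi}(i)$ with $\delta_\xi \in D$ eventually lands in $\lim F(i)$.

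Finally I would exploit the slack built into the definition of $\mathcal F_\beta$ at cofinality $\lambda$. The difficulty is exactly that the definition of $C_{F'}$ tests a tail of coordinates while continuity only controls a club of them, so I would pick $h =^*_\Delta f_\beta$ equal to $f_\beta(i)$ on the club just produced and, off it, equal to an ordinal $\eta_i$ with $F(i) \in \mathcal C^i_{\eta_i}$ and $F(i) \subseteq \eta_i$ (for instance the label of $F(i)$ coming from $F \in \mathcal F_\alpha$). Then $F'(i) := F(i) \cap h(i)$ equals $F(i)$ on the bad coordinates and $F(i) \cap f_\beta(i)$ on the good ones, so $F'(i) \in \mathcal C^i_{h(i)}$ in either case and $F' \in \mathcal F_\beta$. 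With this choice $\lim F'(i) = \lim F(i)$ off the club while agreeing with $\lim F(i)$ below $f_\beta(i)$ on it, which suffices to conclude $C_{F'} = C_F \cap \beta$ by the same limit-point comparison as before. The delicate point to get right — and the one I expect to absorb most of the work — is precisely this reconciliation of the club-many versus tail-many coordinates at cofinality $\lambda$, which the off-club choice of $h$ is designed to handle.
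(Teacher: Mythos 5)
Your proposal is correct and follows essentially the same route as the paper's proof: reduce to $C=C_F$, use closure to get $\beta\in C_F$, restrict $F$ coordinatewise via coherence of the $\vec{\mathcal C}_i$ when $\cf\beta\ne\lambda$, and when $\cf\beta=\lambda$ use total continuity on a club of coordinates together with the $=^*_\Delta$ slack in $\mathcal F_\beta$ to take $F'(i)=F(i)\cap f_\beta(i)$ on the club and $F'(i)=F(i)$ off it. The only cosmetic difference is that the strict-increase-of-partial-sups step is really the club argument from \autoref{weirdgoodness} rather than \autoref{thinnedgoodness}, but the substance is identical.
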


\begin{proof} The lemma is only substantial if $C = C_F$ for some $F \in \mathcal F_\alpha$, and it does not depend on whether $\cf(\alpha)=\lambda$. By assumption $C_F$ is unbounded in $\beta$, so \autoref{weak-sq-closure} implies that $\beta \in C_F$.

\emph{Case 1:} $\cf \beta \ne \lambda$: Let $j<\lambda$ witness $\beta \in C_F$, meaning that if $i \ge j$ then $f_\beta(i) \in \lim F(i)$. By the coherence of $\vec{\mathcal C}^i$ for $i \in S$, it follows that $F(i) \cap f_\beta(i) \in {\mathcal C}_{f_\beta(i)}^i$ for such $i$. Let $F'$ be a function with domain $S$ such that $F'(i) \in \mathcal C_{f_\beta(i)}^i$ for all $i \in S$ and such that $F'(i) = F(i) \cap f_\beta(i)$ for $i \ge j$ in particular. Then $F' \in \mathcal F_\beta$ and $C_{F'}$ is unbounded in $\beta$, so $C_{F'} \in \mathcal C_\beta$. If $\gamma < \beta$, let $j'<\lambda$ witness $f_\gamma <^\ast f_\beta$. Then if $i \ge j,j'$, it follows that $f_\gamma(i) \in F(i)$ if and only if $f_\gamma(i) \in F'(i)$. We conclude that $C_F \cap \beta = C_{F'}$.

\emph{Case 2:} $\cf \beta = \lambda$: Choose a sequence $\seq{\beta_i}{i<\lambda} \subset C_F \cap \beta$; by closure (\autoref{weak-sq-closure}, Case 1) we can assume that $\seq{\beta_i}{i<\lambda}$ is closed and unbounded in $\lambda$, and that $\cf(\beta_i)<\lambda$ for all $i<\lambda$. By \autoref{weirdgoodness}, we also know that $f_\beta =^*_\Delta (\vec f \rest \beta)^\Delta_{\seq{\beta_i}{i<\lambda}}$, i.e$.$ that there is a club $E \subset \lambda$ such that for all $i \in E$, $f_\beta(i) = \sup_{j<i}f_{\beta_j}(i)$. Let $D$ be a club such that $D \subseteq E$ and such that for all $i \in D,j<i$, there is some $j'<i$ witnessing that $\beta_j \in C_F$, and moreoever such that for all $i \in D,j_1,j_2<i$, there is some $j<i$ witnessing that $f_{\beta_{j_1}}<^* f_{\beta_{j_2}}$. It follows that for all $i \in D$ and $j<i$, $f_{\beta_j}(i) \in \lim F(i)$, and therefore that for all $i \in D$, $f_\beta(i) \in \lim F(i)$. Then let $F'$ be defined so that $F'(i) = F(i) \cap f_{\beta}(i)$ for $i \in D \cap S$ and $F'(i) = F(i)$ for $i \in S \setminus D$. Then it follows that $C_F \cap \beta = C_{F'}$: in particular, if $\gamma \in C_{F'}$, then $f_\gamma$ is dominated by $f_\beta$ on a club, so it must be the case that $\gamma<\beta$. Hence we find that $F' \in \mathcal{F}_\beta$ is witnessed by $h$ such that $h(i)=f_\beta(i)$ for $i \in D$ and $h(i) = h'(i)$ for the $h'$ witnessing $F \in \mathcal{F}_\alpha$ (hence $h =^*_\Delta f_\beta$). Therefore we have shown that $C_F \cap \beta \in \mathcal{C}_\beta$.\end{proof}



\begin{lemma}\label{weak-sq-small-ot} For all $\alpha \in \lim(\kappa^+)$ and $C \in \mathcal C_\alpha$, $\ot C < \kappa$.\end{lemma}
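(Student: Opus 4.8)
The plan is to reduce to the only substantial case and then bound a cardinality. If $C \in \mathcal C_\alpha$ is not of the form $C_F$, then by construction $\ot C < \lambda$ (a small club) or $\ot C = \lambda$ (the chosen club when $\cf \alpha = \lambda$), so $\ot C < \kappa$ immediately; and such $C$ only occur when $\cf\alpha \le \lambda$. Likewise, if $\alpha < \kappa$ then $|C| \le |\alpha| < \kappa$ and hence $\ot C < \kappa$. So I may assume $\alpha \ge \kappa$ (whence $|\alpha| = \kappa$) and $C = C_F$; since $\ot C_F < \kappa$ is then equivalent to $|C_F| < \kappa$, it suffices to bound the cardinality. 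The elements $\beta \in C_F$ with $\cf \beta = \lambda$ are limits of the $\cf \beta \ne \lambda$ elements, so it is enough to bound $Y := \{\beta \in C_F : \cf \beta \ne \lambda\}$. For $\beta \in Y$ with witness $j_\beta$, the tail $f_\beta \rest (S \setminus j_\beta)$ lands in $\prod_{i \ge j_\beta} \lim F(i)$, and the assignment $\beta \mapsto f_\beta \rest (S \setminus j_\beta)$ is injective since distinct members of $\vec f$ differ on every tail. One would like to deduce $|Y| < \kappa$ by counting traces: because $\kappa$ is a strong limit and $\ot F(i) < \kappa_i$, each proper initial product $\prod_{i' < i} \ot F(i')$ has size $< \kappa$. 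The obstruction is precisely that the full trace is a $\lambda$-fold product, whose size can reach $\kappa^\lambda > \kappa$; this reflects the fact that $<^*$-domination is only eventual, so no single coordinate controls the trace.

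To defeat this, I would run an induction on $\alpha$ powered by coherence. Suppose the lemma fails and let $\alpha$ be least carrying some $C = C_F$ with $\ot C_F \ge \kappa$. For every limit point $\beta \in \lim C_F$ with $\beta < \alpha$, \autoref{coherence} gives $C_F \cap \beta \in \mathcal C_\beta$, and minimality of $\alpha$ forces $\ot(C_F \cap \beta) < \kappa$. Thus every proper initial segment of $C_F$ cut at a limit point below $\alpha$ has order type $< \kappa$. If $\ot C_F$ were $> \kappa$, then the element of $C_F$ at position $\kappa$ would be such a limit point $\beta^* < \alpha$ with $\ot(C_F \cap \beta^*) = \kappa$, a contradiction. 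Hence $\ot C_F = \kappa$ exactly, and therefore $\cf \alpha = \cf(\ot C_F) = \cf \kappa = \lambda$. So the entire problem reduces to showing that no $C_F$ with $\cf \alpha = \lambda$ can have order type exactly $\kappa$.

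For the final contradiction I would exploit total continuity together with goodness at intermediate cofinalities. Fix a regular $\mu$ with $\lambda < \mu < \kappa$. Any limit point $\beta^*$ of $C_F$ with $\cf \beta^* = \mu$ is a good point (the $\cf > \cf\kappa$ clause of total continuity), so by \autoref{thinnedgoodness} and a pigeonhole over $\mu > \lambda$ one obtains a cofinal $A \subseteq C_F \cap \beta^*$ of order type $\mu$ and a single $j < \lambda$ such that for all $i \ge j$ the values $\seq{f_{\beta'}(i)}{\beta' \in A}$ are strictly increasing and lie in $\lim F(i)$; this forces $\ot F(i) \ge \mu$ for all $i \ge j$. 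The main obstacle is to make these threshold coordinates uniformly bounded as $\mu \nearrow \kappa$: if one can use $\cf \alpha = \lambda$ and the diagonal continuity $f_\alpha =^*_\Delta (\vec f \rest \alpha)^\Delta_A$ to pin a single coordinate $i^*$ serving as a threshold for cofinally many $\mu < \kappa$, then $F(i^*)$ would contain strictly increasing chains of every length below $\kappa$, giving $\ot F(i^*) \ge \kappa > \kappa_{i^*}$ and contradicting the arranged bound $\ot F(i^*) < \kappa_{i^*}$. Locating such a uniform $i^*$ — overcoming the merely eventual nature of $<^*$ by aligning it with the single club $E \subseteq \lambda$ supplied by the diagonal continuity at $\alpha$ — is the crux of the argument.
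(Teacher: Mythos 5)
There is a genuine gap, and you locate it yourself: your final paragraph ends by declaring the uniformization of the thresholds ``the crux of the argument'' without supplying it, so the proof is not complete. Moreover, that particular crux is a real obstruction, not a routine pigeonhole: for a cofinal sequence of regulars $\mu_\xi \nearrow \kappa$ ($\xi<\lambda$) you obtain thresholds $j_{\mu_\xi}<\lambda$, but the map $\xi \mapsto j_{\mu_\xi}$ is an arbitrary function from $\lambda$ to $\lambda$ --- it is not regressive, so Fodor does not apply, and a $\lambda$-to-$\lambda$ map need not be constant (or even bounded) on an unbounded set. So there is no reason a single coordinate $i^*$ should serve as a threshold for cofinally many $\mu$, and the contradiction you are aiming for does not materialize. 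The preceding machinery (reduction to $C=C_F$, the minimal-counterexample argument via \autoref{coherence} forcing $\ot C_F=\kappa$ and $\cf\alpha=\lambda$) is fine but ends up being a detour that deposits you in front of exactly the difficulty you started with.

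The missing idea is to uniformize on the coordinate side rather than the cofinality side, and it is available from the setup. The sequence $\seq{\kappa_i}{i<\lambda}$ was chosen continuous and the $\square^*_{\kappa_i}$-sequences were arranged so that $\ot F(i)<\kappa_i=\sup_{j<i}\kappa_j$ for each $i \in S$; hence $i \mapsto \min\{j<i : \ot F(i)<\kappa_j\}$ is regressive on the stationary set $S$, and Fodor gives a stationary $T \s S$ and a single $k<\lambda$ with $\ot F(i)<\kappa_k$ for all $i \in T$. Now send each $\beta \in C_F$ to the trace $g_\beta \colon i \mapsto \ot(F(i)\cap f_\beta(i))$ restricted to $T$: this map is injective (distinct $f_\beta$'s separate on a tail, and $T$ is unbounded) and its range lies in a set of size at most $\kappa_k^{\lambda}<\kappa$ by the cardinal arithmetic assumption. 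That bounds $|C_F|$, hence $\ot C_F$, below $\kappa$ in one step, with no induction on $\alpha$ needed. Your observation that the naive codomain $\prod_{i\in S}\ot F(i)$ can have size $\kappa^\lambda>\kappa$ is exactly right --- the Fodor step is precisely what shrinks it to $\kappa_k^\lambda$.
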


\begin{proof} It is sufficent to show that $\ot C_F < \kappa$ for all $F \in \mathcal F_\alpha$ and all $\alpha < \kappa^+$ (independently of whether $\cf(\lambda) = \alpha$). Recall that we assumed that the $\square_{\kappa_i}^*$-sequences $\seq{\mathcal C_\xi^i}{i<\kappa_i^+}$ were defined so that for all for all $i<\lambda, \xi<\kappa_i^+,C \in \mathcal C_\xi^i$, $\ot C <\kappa_i$.

Fix $\alpha<\kappa^+$. For every $i \in S$, there is some $j<i$ such that $\ot F(i) < \kappa_j$. This means that there is a stationary $T \s S$ and some $k$ such that for all $i \in T$, $\ot F(i) < \kappa_k$. If $\beta \in C_F$ and $i \in T$, let $g_\beta(i) = \ot (F(i) \cap f_\beta(i))$ for all $i$ such that $f_\beta(i) \in F(i)$ and $0$ otherwise. The set $\{g_\beta:\beta \in C_F\}$ has size $\kappa_k^{\lambda}<\kappa$ (we assumed this bit of cardinal arithmetic), so it is enough to observe that if $\beta,\beta' \in C_F$ and $\beta<\beta'$, then $g_\beta$ and $g_{\beta'}$ are distinct.\end{proof}


\begin{lemma} For all $\alpha \in \lim(\kappa^+)$, $|\mathcal C_\alpha| \le \kappa$.\end{lemma}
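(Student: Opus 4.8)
The plan is to split $\mathcal{C}_\alpha$ into the clubs of the form $C_F$ and the auxiliary clubs added by hand, and to bound each part separately. The auxiliary part is immediate: if $\cf\alpha>\lambda$ there are none, if $\cf\alpha=\lambda$ there is exactly one, and if $\cf\alpha<\lambda$ it is the collection of all clubs $C\subseteq\alpha$ with $\ot C<\lambda$. Since such a $C$ has cardinality $<\lambda\le\kappa$ and $|\alpha|\le\kappa$, the number of them is at most $\kappa^{<\lambda}$; as $\kappa$ is a strong limit and every $\theta<\lambda=\cf\kappa$ satisfies $\kappa^\theta=\kappa$, this is exactly $\kappa$. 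So everything reduces to the \textbf{key estimate} $|\{C_F:F\in\mathcal F_\alpha\}|\le\kappa$.

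First I would cut down the parameter space. Writing $P_i:=\{f_\beta(i):\beta<\alpha\}\subseteq\kappa_i^+$ for the ``grid'' traced out by the scale, the definition of $C_F$ only ever tests whether $f_\beta(i)\in\lim F(i)$, and only for a tail of $i\in S$. Consequently, if two functions $F,F'\in\mathcal F_\alpha$ satisfy $\lim F(i)\cap P_i=\lim F'(i)\cap P_i$ for all but boundedly many $i\in S$, then $C_F=C_{F'}$ (and when $\cf\alpha=\lambda$ the passage to a witness $h=^*_\Delta f_\alpha$ changes things only on a nonstationary set, so the same reduction applies). Thus $F\mapsto C_F$ factors through the \emph{eventual trace} $\langle\lim F(i)\cap P_i:i\in S\rangle$ taken modulo bounded sets, and it suffices to bound the number of such traces.

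The hard part---and the real content of the lemma---is that a naive product bound is not good enough: since each $\mathcal C^i_{f_\alpha(i)}$ may have full width $\kappa_i$, one only gets $\prod_{i\in S}\kappa_i=\kappa^+$ (using $\tau^\lambda<\kappa$ for $\tau<\kappa$, which is exactly our arithmetic hypothesis), and by K\"onig's theorem this product genuinely exceeds $\kappa$. So the argument must exhibit a real collapse from $\kappa^+$ to $\kappa$. The mechanism I would try to exploit is the order-type encoding already used in \autoref{weak-sq-small-ot}: for each $F$ (with $C_F$ unbounded, which by \autoref{unbounded} is automatic when $\cf\alpha>\lambda$) press down to obtain a stationary $T\subseteq S$ and a $k<\lambda$ with $\ot F(i)<\kappa_k$ for $i\in T$, and attach to $\beta\in C_F$ the function $g_\beta\rest T$ with $g_\beta(i)=\ot(F(i)\cap f_\beta(i))$, which lands in $\prod_{i\in T}\kappa_k$, a set of size $\kappa_k^{|T|}\le\kappa_k^\lambda<\kappa$. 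Partitioning $\mathcal F_\alpha$ by the at most $\lambda$ many values of $k$, and exploiting the coherence of the sequences $\vec{\mathcal C}^i$ (so that the clubs in $\mathcal C^i_{f_\alpha(i)}$ pairwise share initial segments below common limit points), I would aim to show that within each piece the eventual trace is determined by at most $\kappa$-much data, and then sum over $k<\lambda$ to conclude $|\{C_F\}|\le\lambda\cdot\kappa=\kappa$. Reconciling this encoding with the K\"onig lower bound on $\prod_{i\in S}\kappa_i$---that is, verifying that distinct admissible clubs really are separated by the order-type data on a single pressed-down stationary set rather than by the full product---is the step I expect to be the main obstacle, and is where the closure of $C_F$ (\autoref{weak-sq-closure}) and the fixedness of the scale must be used in an essential way.
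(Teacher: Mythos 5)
Your skeleton matches the paper's: split off the auxiliary clubs (correct, and your count of $\kappa^{<\lambda}=\kappa$ is fine), press down with Fodor to get a stationary $T\subseteq S$ and $k<\lambda$ controlling $F$ on $T$, observe there are at most $2^\lambda\cdot\lambda<\kappa$ many pairs $(T,k)$, and handle $\cf\alpha=\lambda$ by noting the witness $h$ agrees with $f_\alpha$ on a stationary set (the paper additionally notes there are only $2^\lambda$ many candidate agreement sets, so this really does reduce to the other case). The reduction to the ``eventual trace'' $\langle \lim F(i)\cap P_i\rangle$ is also a correct observation. But the step you yourself flag as the main obstacle is exactly the step that is missing, and the encoding you propose does not close it. The function $g_\beta(i)=\ot(F(i)\cap f_\beta(i))$ is the right tool for \autoref{weak-sq-small-ot} (it injects $C_F$ into a small product, bounding $\ot C_F$), but it is $F$-dependent: the set $\{g_\beta:\beta\in C_F\}$ need not determine $C_F$, so counting these sets does not bound the number of distinct clubs $C_F$ as $F$ ranges over a piece of $\mathcal F_\alpha$. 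Nothing in your writeup supplies the required injectivity across different $F$'s.

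The paper's fix is to use an $F$-\emph{independent} encoding: fix an enumeration $\mathcal C^i_{f_\alpha(i)}=\seq{C^i_\zeta}{\zeta<\kappa_i}$ and press down so that on $T$ both the index $\zeta$ with $F(i)=C^i_\zeta$ \emph{and} $\ot(C^i_\zeta)$ are below $\kappa_k$ (your pressing down bounds only the order type, which is not enough for what follows). Then assign to $F$ the set $G_F:=\{f_\beta\rest T:\beta\in C_F\}$. Since $\vec f$ is $<^*$-increasing and $T$ is unbounded in $\lambda$, the map $\beta\mapsto f_\beta\rest T$ is injective, so $G_F$ determines $C_F$; and $G_F$ is (essentially) a subset of $\prod_{i\in T}R^k_T(i)$ where $R^k_T(i)=\bigcup_{\zeta<\kappa_k}C^i_\zeta$ has size $\le\kappa_k$, so there are at most $2^{\kappa_k^\lambda}\le\kappa$ possible values of $G_F$ (strong limit). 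This is the collapse from $\kappa^+$ to $\kappa$ you were looking for, and it uses the pressed-down bound on the enumeration index, not coherence of the $\vec{\mathcal C}^i$ or closure of $C_F$ as you speculate. As written, your proposal identifies the difficulty but does not resolve it, so it is not yet a proof.
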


\begin{proof} Our assumption that $\tau^\lambda < \kappa$ for all $\tau < \kappa$ implies that $|\{C \subset \alpha : \ot C < \lambda\}| = \kappa$, so it is enough to show that $|\{C_F:F \in \mathcal{F}_\alpha\}| \le \kappa$ for all $\alpha \in \lim(\kappa^+$).

Fix $\alpha \in \lim(\kappa^+)$. We first argue for the case in which $\cf(\alpha) \ne \lambda$. For all $i \in S$ enumerate $\mathcal C^i_{f_\alpha(i)} = \seq{C^i_\zeta}{\zeta<\kappa_i}$. Given a stationary set $T \subset S$ and $\zeta < \kappa$, let
\[
X_T^k = \{F \in \mathcal{F}_\alpha : \forall i \in T, \exists \zeta < \kappa_k \text{ such that }F(i) = C^i_\zeta \text{ and }\ot(C^i_\zeta) < \kappa_k\}.
\]
We claim that for all $F \in \mathcal F_\alpha$, there are $T \subset S$ and $k < \lambda$ such that $C_F \in X_T^k$. Let $F \in \mathcal F_\alpha$. For each $F$ and $i \in S$, there is some $j<i$ such that we have $F(i) = C^i_\zeta$ for some $\zeta < \kappa_j$ and $\ot(C^i_\zeta)<\kappa_j$ as well. It follows that there is a stationary $T \subset S$ and $k < \lambda$ such that for all $i \in T$, $F(i) = C^i_\zeta$ and $\ot(C^i_\zeta)<\kappa_k$ for some $\zeta < \kappa_k$.

Because $2^\lambda = \lambda^\lambda < \kappa$, there are at most $\kappa$-many $X_T^k$'s. Therefore it remains to show that for all such $T,k$, that $|\{C_F:F \in X_T^k\}| \le \kappa$. Let $G_F$ be the set of functions $g_\beta=f_\beta \rest T$ for all $\beta \in C_F$. If $\beta \ne \beta'$, then $g_\beta \ne g_{\beta'}$, so if $F' \ne F$ then $G_F \ne G_{F'}$. Now, for $i \in T$, let $R^k_T(i) = \bigcup_{\zeta<\kappa_k}C_\zeta^i$. Then for all $F \in F^k_T$, $G_F \subseteq \prod_{i \in T} R^k_T(i)$. Moreover, $\prod_{i \in T} R^k_T(i)$ has cardinality $\kappa_k^\lambda < \kappa$. It follows that $|\{C_F:F \in X_T^k\}| \le \kappa$.

 
Now we comment on the case in which $\cf(\alpha) = \lambda$. By intersecting with an appropriate club, we can see that for all $F \in \mathcal{F}_\alpha$, there is some stationary $S' \subset S$ such that for all $i \in S'$, $F(i) \in C^i_{f_\alpha(i)}$. The argument above can be done for all $F \in \mathcal{F}_\alpha$ such that there is an $h$ witnessing $F \in \mathcal{F}_\alpha$ where $h \rest S' = f_\alpha \rest S'$. Since $2^\lambda < \kappa$, and we only need to consider $2^\lambda$-many possible $S'$, this is sufficient.\end{proof}
 

This finishes the proof of \autoref{terse-weak-sq}.

\subsection{Sketching the Construction for Partial Square}\label{sec-partsq}

Now we will sketch a proof of \autoref{almost-silver-partial-sq}.

This can be proved with the same techniques as the previous theorem, and the setup is basically the same:  We fix a singular strong limit $\kappa$ with cofinality $\lambda>\omega$ such that $\{\delta<\kappa: \square_\delta \textup{ holds}\}$ is stationary (and of order-type $\lambda$). Let $\seq{\kappa_i}{i<\lambda}$ be continuous, cofinal, and strictly increasing in $\kappa$. We find that $S:= \{i<\lambda: \square_{\kappa_i} \textup{ holds}\}$ is stationary in $\lambda$, and we can construct a totally continuous scale $\vec f = \seq{g_\alpha}{\alpha<\kappa^+}$ on $\prod_{i \in S} \kappa_i^+$. Let $\mathcal C_i = \seq{C_\xi^i}{\xi<\kappa_i^+}$ witness $\square_{\kappa_i}$ for all $i \in S$. By \autoref{nice-club}, we can again assume that $\ot C_\xi^i<\kappa_i$ for all $\xi<\kappa_i^+,i<\lambda$. Now we can define the clubs of which our square sequence will consist. For each $\alpha \in \kappa^+ \cap \cof(>\lambda)$, let:
\[
X_\alpha := \seq{\beta<\alpha}{\{i<\lambda:f_\beta(i) \in \lim C^i_{f_\alpha(i)}\} \text{ is co-bounded in }S}.
\]
Then we have an analog of \autoref{weak-sq-closure}:

\begin{lemma}\label{closure} For all $\alpha \in \lim(\kappa^+)$, if $\seq{\beta_\xi}{\xi<\tau} \subset X_\alpha$'s and $\tau \ne \lambda$, then $\sup_{\xi < \tau}\beta_\xi \in X_\alpha$.\end{lemma}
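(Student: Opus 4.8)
The plan is to mirror the proof of \autoref{weak-sq-closure}, splitting on the cofinality $\tau$ of the approaching sequence, but now working with the set $X_\alpha$ defined via the single scale $\vec f$ and the \emph{fixed} clubs $C^i_{f_\alpha(i)}$ (there is no multiplicity $\mathcal{C}^i_\xi$ and no $=^*_\Delta$ ambiguity here, since for the partial square we only ever witness membership using $f_\alpha$ itself). Fix $\seq{\beta_\xi}{\xi<\tau} \subset X_\alpha$ strictly increasing with supremum $\beta := \sup_{\xi<\tau}\beta_\xi$; I want to show $\beta \in X_\alpha$, i.e.\ that $\{i<\lambda : f_\beta(i) \in \lim C^i_{f_\alpha(i)}\}$ is co-bounded in $S$. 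For each $\xi<\tau$ fix $j_\xi<\lambda$ witnessing $\beta_\xi \in X_\alpha$, so that $f_{\beta_\xi}(i) \in \lim C^i_{f_\alpha(i)}$ for all $i \in S \setminus j_\xi$.

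First I would handle \emph{Case 1}, $\tau<\lambda$: put $j' := \sup_{\xi<\tau}j_\xi<\lambda$ (using regularity/smallness of $\tau$), so that for every $i \in S \setminus j'$ the whole sequence $\seq{f_{\beta_\xi}(i)}{\xi<\tau}$ lies in $\lim C^i_{f_\alpha(i)}$; since $\lim C^i_{f_\alpha(i)}$ is closed, its supremum $\sup_{\xi<\tau}f_{\beta_\xi}(i)$ also lies in $\lim C^i_{f_\alpha(i)}$. By continuity of $\vec f$ (the $\cf\beta<\cf\kappa$ clause of total continuity) there is $j''<\lambda$ with $f_\beta(i)=\sup_{\xi<\tau}f_{\beta_\xi}(i)$ for all $i \ge j''$, and taking $j \ge j',j''$ shows $f_\beta(i)\in\lim C^i_{f_\alpha(i)}$ for $i \in S\setminus j$, as required. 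Next, \emph{Case 2}, $\tau>\lambda$: the hypothesis excludes $\tau=\lambda$, and this is exactly where \autoref{thinnedgoodness} enters. By pigeonhole there is an unbounded $Z\subset\tau$ and a single $j'<\lambda$ with $j_\xi=j'$ for all $\xi\in Z$; applying \autoref{thinnedgoodness} to the cofinal set $\{\beta_\xi:\xi\in Z\}$ (note $\cf\beta=\cf\tau>\lambda=\cf\kappa$, so $\beta$ is a good point and goodness is witnessed) yields a further $Z'\subset Z$ and $j''<\lambda$ witnessing goodness, whence by continuity $f_\beta(i)=\sup_{\xi\in Z'}f_{\beta_\xi}(i)$ for $i \ge j''$; closure of $\lim C^i_{f_\alpha(i)}$ then gives $\beta\in X_\alpha$ with witness $j \ge j',j''$, just as in Case 2 of \autoref{weak-sq-closure}.

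The main point to get right is the interaction between ``co-boundedness in $S$'' and the two notions of supremum used to compute $f_\beta$. In Case 1 the finitely-many-below-$\lambda$ bound $j'$ makes the co-bounded sets align on a common tail, and in Case 2 the good witness $j''$ plays the same role; in both cases the decisive fact is that a supremum of ordinals each lying in the \emph{closed} set $\lim C^i_{f_\alpha(i)}$ again lies in that set, so that the pointwise suprema transfer membership from the $\beta_\xi$ to $\beta$. I expect the only genuine obstacle to be bookkeeping the witnesses: one must ensure that the $j$ chosen at the end exceeds both the ``co-bounded threshold'' $j'$ and the ``continuity/goodness threshold'' $j''$, so that on the tail $S\setminus j$ both phenomena hold simultaneously. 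Since $\tau=\lambda$ is excluded by hypothesis, the delicate $\Delta$-argument of \autoref{weirdgoodness} (Case 3 of \autoref{weak-sq-closure}) is not needed here, which is precisely what makes this lemma strictly easier than its weak-square counterpart.
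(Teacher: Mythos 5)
Your proposal is correct and is exactly the argument the paper intends: the paper states this lemma without proof, remarking only that it is the analog of Lemma~\ref{weak-sq-closure}, and your two cases reproduce Cases 1 and 2 of that proof (with the $\tau=\lambda$ case rightly noted as excluded by hypothesis, which is why $X_\alpha$ needs no separate clause there). The bookkeeping of the thresholds $j'$ and $j''$ and the use of \autoref{thinnedgoodness} plus total continuity match the paper's treatment.
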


%
%

Then let $C_\alpha$ be the closure of $X_\alpha$ inside $\alpha$. The partial square sequence will be the sequence $\seq{C_\alpha}{ \alpha \in \kappa^+ \cap \cof(>\lambda)}$. Proofs of the various lemmas are analogous. Coherence for the case $\cf(\beta)=\lambda$ is easier since no witness needs to be constructed.

\begin{lemma} For all $\alpha,\beta \in \lim(\lambda^+)$, if $\gamma \in \lim C_\alpha, \lim C_\beta$, then $C_\alpha \cap \gamma = C_\beta \cap \gamma$.\end{lemma}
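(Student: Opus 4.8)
The plan is to establish coherence for the partial square sequence $\seq{C_\alpha}{\alpha \in \kappa^+ \cap \cof(>\lambda)}$ by first proving it at the level of the underlying sets $X_\alpha$ and then transferring it to their closures $C_\alpha$. Fix $\alpha, \beta \in \kappa^+ \cap \cof(>\lambda)$ and suppose $\gamma \in \lim C_\alpha \cap \lim C_\beta$. The key reduction is to show that $X_\alpha \cap \gamma = X_\beta \cap \gamma$, from which $C_\alpha \cap \gamma = C_\beta \cap \gamma$ follows by taking closures inside $\gamma$ (using that $\gamma$ is a limit point of both). Since $\gamma \in \lim C_\alpha$, by the analogue of \autoref{weak-sq-closure} (specifically \autoref{closure}) applied to a cofinal sequence of order-type $\cf(\gamma) \ne \lambda$ drawn from $X_\alpha$ below $\gamma$, we get $\gamma \in X_\alpha$, and symmetrically $\gamma \in X_\beta$.

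Next I would extract the coherence of the ground-model $\square_{\kappa_i}$-sequences. Because $\gamma \in X_\alpha$, there is some $j_\alpha < \lambda$ such that for all $i \in S \setminus j_\alpha$, $f_\gamma(i) \in \lim C^i_{f_\alpha(i)}$; coherence of $\mathcal C_i$ then gives $C^i_{f_\alpha(i)} \cap f_\gamma(i) = C^i_{f_\gamma(i)}$ for these $i$. Symmetrically, $\gamma \in X_\beta$ yields some $j_\beta < \lambda$ with $C^i_{f_\beta(i)} \cap f_\gamma(i) = C^i_{f_\gamma(i)}$ for $i \in S \setminus j_\beta$. Setting $j = \max(j_\alpha, j_\beta)$, for all $i \in S \setminus j$ the two truncations agree: $C^i_{f_\alpha(i)} \cap f_\gamma(i) = C^i_{f_\beta(i)} \cap f_\gamma(i)$. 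This is the crucial point where the full (non-partial) coherence of each $\mathcal C_i$ forces agreement of the square data below $f_\gamma(i)$.

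Now I would check membership coincidence below $\gamma$. Take any $\delta < \gamma$; since $f_\delta <^* f_\gamma$, there is $j' < \lambda$ with $f_\delta(i) < f_\gamma(i)$ for all $i \ge j'$. For $i \in S \setminus \max(j,j')$ we then have $f_\delta(i) \in \lim C^i_{f_\alpha(i)}$ iff $f_\delta(i) \in \lim(C^i_{f_\alpha(i)} \cap f_\gamma(i)) = \lim(C^i_{f_\beta(i)} \cap f_\gamma(i))$ iff $f_\delta(i) \in \lim C^i_{f_\beta(i)}$. Hence the set $\{i : f_\delta(i) \in \lim C^i_{f_\alpha(i)}\}$ is co-bounded in $S$ exactly when $\{i : f_\delta(i) \in \lim C^i_{f_\beta(i)}\}$ is, so $\delta \in X_\alpha$ iff $\delta \in X_\beta$. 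This proves $X_\alpha \cap \gamma = X_\beta \cap \gamma$, and taking closures finishes the proof.

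The main obstacle I anticipate is the interaction between the co-bounded (rather than exact) nature of the defining condition for $X_\alpha$ and the passage to closures: one must be careful that the witnessing tail-indices $j_\alpha, j_\beta, j'$ can be uniformly bounded below $\lambda$, which is fine for a single $\delta$ but requires attention when arguing that $\gamma$ itself lands in both $X_\alpha$ and $X_\beta$ via \autoref{closure}. Since $\cf(\gamma) \ne \lambda$ is not guaranteed a priori, I would note that $\gamma \in \lim C_\alpha$ only requires $\gamma$ to be a limit of points of $X_\alpha$ of cofinality $\ne \lambda$ (the closure is taken precisely over such points, as in the weak-square construction), so \autoref{closure} applies to yield $\gamma \in X_\alpha$ regardless of $\cf(\gamma)$. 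The remaining steps are then routine transfers of coherence from the component sequences.
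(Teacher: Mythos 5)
Your treatment of the case $\cf(\gamma)\ne\lambda$ is essentially the paper's Case 1: use \autoref{closure} to get $\gamma\in X_\alpha\cap X_\beta$, pull coherence of the ground-model sequences down to a common tail, and match memberships $\delta\in X_\alpha\leftrightarrow\delta\in X_\beta$ below $\gamma$. That part is fine. The gap is in the case $\cf(\gamma)=\lambda$. \autoref{closure} hypothesizes that the \emph{order type} $\tau$ of the increasing sequence drawn from $X_\alpha$ is different from $\lambda$; it says nothing about the cofinalities of the individual points $\beta_\xi$. When $\cf(\gamma)=\lambda$, every cofinal increasing sequence in $\gamma$ has order type of cofinality $\lambda$, so the lemma simply does not apply, and you cannot conclude $\gamma\in X_\alpha$. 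You appear to be importing the convention from the weak-square construction, where membership of $\lambda$-cofinality points in $C_F$ is \emph{defined} as being a limit of points of $C_F$ of smaller cofinality; but in the partial-square construction $X_\alpha$ is defined uniformly by the co-boundedness condition and $C_\alpha$ is its topological closure, so $\gamma\in\lim C_\alpha$ only gives $\gamma\in C_\alpha$, not $\gamma\in X_\alpha$. And indeed $\gamma$ may genuinely fail to be in $X_\alpha$: the witnesses $j_\xi$ for the points $\beta_\xi\in X_\alpha$ converging to $\gamma$ can be cofinal in $\lambda$, so there need be no single tail of $S$ on which $f_\gamma(i)\in\lim C^i_{f_\alpha(i)}$ --- one only gets this on a club of $i$, which is exactly the $=^*_\Delta$ phenomenon that forces the diagonal-supremum machinery elsewhere in the paper. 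Since your entire computation hinges on the uniform tails $j_\alpha,j_\beta$ extracted from $\gamma\in X_\alpha\cap X_\beta$, the argument collapses here.

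The paper closes this case by a different, purely local device: choose a continuous increasing sequence $\seq{\gamma_\xi}{\xi<\lambda}$ converging to $\gamma$ such that both $C_\alpha$ and $C_\beta$ meet every interval $(\gamma_\xi,\gamma_{\xi+1})$. Then for each limit $\eta<\lambda$ the point $\gamma_\eta$ is a limit point of both $C_\alpha$ and $C_\beta$ of cofinality $<\lambda$, so Case 1 applies to it and gives $C_\alpha\cap\gamma_\eta=C_\beta\cap\gamma_\eta$; taking the union over limit $\eta$ yields $C_\alpha\cap\gamma=C_\beta\cap\gamma$ without ever deciding whether $\gamma$ itself lies in $X_\alpha$ or $X_\beta$. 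You should replace your uniform-tail argument at $\gamma$ with this interleaving reduction (or some equivalent) for the $\cf(\gamma)=\lambda$ case.
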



\begin{proof} Suppose $\gamma$ has cofinality $\tau$.

\emph{Case 1:} $\tau \ne \mu$. Then $X_\alpha$ and $X_\beta$ are both unbounded in $\gamma$, so \autoref{closure} implies that $\gamma \in X_\alpha \cap X_\beta$. We argue as in \autoref{coherence} to show that $X_\alpha \cap \gamma = X_\beta \cap \gamma$, so the result follows.

\emph{Case 2:} $\tau = \mu$. Let $\seq{\gamma_\xi}{\xi<\tau}$ be a strictly increasing and continuous sequence converging to $\gamma$ such that for all $\xi<\tau$, $C_\alpha \cap (\gamma_\xi,\gamma_{\xi+1})$ and $C_\beta \cap (\gamma_\xi,\gamma_{\xi+1})$ are both non-empty. For each limit $\eta < \tau$, $C_\alpha \cap \gamma_\eta = C_\beta \cap \gamma_\eta$ by Case 1. Therefore, $C_\alpha \cap \gamma = C_\beta \cap \gamma$.\end{proof}

\begin{lemma} For all $\alpha \in \lim(\lambda^+)$ such that $\cf \alpha>\cf \lambda$, $C_\alpha$ is unbounded in $\alpha$.\end{lemma}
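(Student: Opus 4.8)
The plan is to reduce to the unboundedness of $X_\alpha$ and then reprise the interleaving construction from the proof of \autoref{unbounded}. Since $C_\alpha$ is by definition the closure of $X_\alpha$ inside $\alpha$, we have $X_\alpha \subseteq C_\alpha$, so it suffices to show that $X_\alpha$ is unbounded in $\alpha$. The hypothesis $\cf \alpha > \lambda$ is exactly what makes this possible: because $\vec f$ is totally continuous, $\alpha$ is a good point and $f_\alpha$ is an exact upper bound of $\seq{f_\beta}{\beta < \alpha}$.

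First I would fix an arbitrary $\bar\alpha < \alpha$ and produce an element of $X_\alpha$ above it. Following \autoref{unbounded}, I build by recursion a strictly increasing sequence $\seq{\alpha_n}{n<\omega}$ in the interval $(\bar\alpha, \alpha)$, a $<^*$-increasing sequence $\seq{g_n}{n<\omega}$ in $\prod_{i \in S}\kappa_i^+$, and ordinals $\seq{j_n}{n<\omega}$ below $\lambda$. Beginning from any $\alpha_0 \in (\bar\alpha,\alpha)$ and $g_0 <^* f_\alpha$, given $\alpha_n$ and $g_n$ I let $g_{n+1}$ be the function whose value $g_{n+1}(i)$ is, for each $i \in S$, an element of $C^i_{f_\alpha(i)}$ strictly above $f_{\alpha_n}(i)$; this is possible because $C^i_{f_\alpha(i)}$ is club in $f_\alpha(i)$ and $f_{\alpha_n}(i) < f_\alpha(i)$ for large $i$. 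Then $g_{n+1} <^* f_\alpha$, so exactness of $f_\alpha$ lets me choose $\alpha_{n+1} < \alpha$ with $g_{n+1} <^* f_{\alpha_{n+1}}$, witnessed by some $j_{n+1} < \lambda$.

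Setting $\beta := \sup_{n<\omega}\alpha_n$ gives $\bar\alpha < \beta < \alpha$, where $\beta < \alpha$ uses $\cf \alpha > \omega$, and moreover $\cf \beta = \omega$. I claim $\beta \in X_\alpha$, witnessed by $j := \sup_{n<\omega}j_n < \lambda$. For each $i \in S$ with $i \ge j$, the interleaving $f_{\alpha_n}(i) < g_{n+1}(i) < f_{\alpha_{n+1}}(i)$ together with $g_{n+1}(i) \in C^i_{f_\alpha(i)}$ forces $\sup_{n<\omega}f_{\alpha_n}(i) \in \lim C^i_{f_\alpha(i)}$. Crucially, continuity is now invoked at $\beta$ rather than at $\alpha$: since $\cf \beta = \omega < \lambda$ and $A := \{\alpha_n : n<\omega\}$ is cofinal in $\beta$ of order type $\cf \beta$, the first clause of total continuity gives $(\vec f \rest \beta)_A =^* f_\beta$, so $f_\beta(i) = \sup_{n<\omega}f_{\alpha_n}(i)$ for all $i$ beyond some bound. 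Combining the two facts, $f_\beta(i) \in \lim C^i_{f_\alpha(i)}$ for all sufficiently large $i \in S$, so $\{i \in S : f_\beta(i) \in \lim C^i_{f_\alpha(i)}\}$ is co-bounded in $S$ and therefore $\beta \in X_\alpha$.

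The construction is essentially identical to \autoref{unbounded}, so no genuinely new obstacle arises; the one subtlety to keep straight is that the continuity applied at the end is the low-cofinality clause at the countable-cofinality point $\beta$, not any property of $\alpha$, while exactness of $f_\alpha$ is used only to keep the recursion inside $\alpha$. The passage from ``cofinitely many $i<\lambda$'' (as in the weak-square argument) to ``co-bounded in $S$'' is immediate here, since both $\vec f$ and the clubs $C^i$ are indexed by $i \in S$ to begin with.
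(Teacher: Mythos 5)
Your proposal is correct and follows exactly the route the paper intends: it reduces to the unboundedness of $X_\alpha$ and then reprises the interleaving construction of \autoref{unbounded}, with the single club $C^i_{f_\alpha(i)}$ playing the role of $F(i)$, exactness of $f_\alpha$ (from $\cf\alpha>\lambda$ and total continuity) driving the recursion, and the low-cofinality continuity clause applied at $\beta=\sup_n\alpha_n$. The paper's own proof is just the remark ``argue as in \autoref{unbounded}, but this version is easier,'' and your write-up is a faithful instantiation of that, including the correct observation that the only change is passing from ``cofinitely many $i$'' to ``co-bounded in $S$.''
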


 It is sufficient to show that if $\cf \alpha > \cf \lambda$, then $X_\alpha$ is unbounded in $\alpha$. We argue as in \autoref{unbounded}, but this version is easier.

\begin{lemma} For all $\alpha \in \lim(\lambda^+)$, $\ot C_\alpha < \lambda$.\end{lemma}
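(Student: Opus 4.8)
The plan is to reprise the cardinality argument of \autoref{weak-sq-small-ot}, now run against the single $\square_{\kappa_i}$-sequences $\seq{C^i_\xi}{\xi<\kappa_i^+}$ rather than against a choice function $F$. Since $C_\alpha$ is by definition the closure of $X_\alpha$ inside $\alpha$, and since the topological closure of an infinite set of ordinals $X$ satisfies $|\overline X|\le|X|$ (every limit point equals $\sup(X\cap\gamma)$ and is thus determined by the limit ordinal $\ot(X\cap\gamma)\le\ot X$, of which there are at most $|X|$), it suffices to prove $|X_\alpha|<\kappa$. Because $C_\alpha\s\alpha<\kappa^+$ and $\kappa$ is a cardinal, the cardinality bound $|C_\alpha|<\kappa$ then forces the order-type bound $\ot C_\alpha<\kappa$ demanded by clause (2) of the definition of a partial square sequence on $\kappa^+$.

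The first step is to uniformize the local clubs. Our standing assumption that $\ot C^i_\xi<\kappa_i$ gives $\ot C^i_{f_\alpha(i)}<\kappa_i$ for every $i\in S$, so for each such $i$ there is some $j<i$ with $\ot C^i_{f_\alpha(i)}<\kappa_j$. Fodor's Lemma then provides a stationary $T\s S$ and a single $k<\lambda$ such that $\ot C^i_{f_\alpha(i)}<\kappa_k$ for all $i\in T$; here $T$ and $k$ depend only on $\alpha$.

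The second step is the injection. For $\beta\in X_\alpha$ define $g_\beta$ on $T$ by $g_\beta(i)=\ot\pr{C^i_{f_\alpha(i)}\cap f_\beta(i)}$, noting $g_\beta(i)<\kappa_k$ by the choice of $T$. I claim $\beta\mapsto g_\beta$ is injective on $X_\alpha$. If $\beta<\beta'$ both lie in $X_\alpha$, then $f_\beta<^*f_{\beta'}$, while the defining clause of $X_\alpha$ guarantees $f_\beta(i),f_{\beta'}(i)\in\lim C^i_{f_\alpha(i)}$ for all $i\in S$ beyond a fixed bound; since $T$ is unbounded in $\lambda$ I may choose $i\in T$ large enough that $f_\beta(i)<f_{\beta'}(i)$ with both values limit points of $C^i_{f_\alpha(i)}$, whence $g_\beta(i)<g_{\beta'}(i)$ and $g_\beta\ne g_{\beta'}$. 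Thus $X_\alpha$ embeds into the set of functions $T\to\kappa_k$, which has cardinality $\kappa_k^{|T|}\le\kappa_k^\lambda<\kappa$ because $\kappa$ is a strong limit and $\kappa_k,\lambda<\kappa$.

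The only delicate point—and it is a very mild one—is ensuring that the coordinate $i$ separating $g_\beta$ from $g_{\beta'}$ can be taken \emph{inside} the fixed stationary set $T$; this is exactly why I extract $T$ as unbounded and exploit that the co-boundedness clause defining membership in $X_\alpha$ holds on a tail of $S$, so that it survives intersection with $T$. With $|X_\alpha|<\kappa$ in hand, the closure estimate gives $|C_\alpha|=|\overline{X_\alpha}|\le|X_\alpha|<\kappa$, and since $\kappa$ is a cardinal this yields $\ot C_\alpha<\kappa$, completing the lemma. Note that this argument never uses the value of $\cf\alpha$ beyond the fact that $X_\alpha$, hence $C_\alpha$, is defined, so it applies uniformly to every $\alpha$ in the support of the sequence.
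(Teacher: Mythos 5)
Your proposal is correct and matches the paper's intended argument: the paper's one-line proof simply defers to the analogue of \autoref{weak-sq-small-ot}, and you carry out exactly that argument---Fodor's Lemma to fix a stationary $T \subseteq S$ and $k<\lambda$ with $\ot C^i_{f_\alpha(i)} < \kappa_k$ for $i \in T$, then the injection $\beta \mapsto \seq{\ot\pr{C^i_{f_\alpha(i)} \cap f_\beta(i)}}{i \in T}$ into a set of size $\kappa_k^\lambda < \kappa$---adding only the (correct) observation that passing from $X_\alpha$ to its closure $C_\alpha$ does not increase cardinality. You also rightly read the stated bound $\ot C_\alpha < \lambda$ as a typo for $\ot C_\alpha < \kappa$, which is what clause (2) of the partial-square definition requires and what the analogous argument delivers.
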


It is sufficent to show that $\ot X_\alpha < \lambda$ for all $\alpha$.

\subsection{Further Directions}

We conclude with the following:

\begin{question} Suppose that $\kappa$ is a singular strong limit of uncountable cofinality $\lambda$ such that $S:=\{\delta < \kappa : \square_\delta^* \text{ holds}\}$. Does $\prod_{\delta \in S} \delta^+$ carry a good pseudo-scale?\end{question}


Note that we have:

\begin{fact} If $\kappa$ is singular, then $\square^*_\kappa$ implies that all pseudo-scales on $\kappa$ are good.\footnote{This is in Cummings' survey \cite{Cummings2005}, but without the distinction involving pseudo-scales.}\end{fact}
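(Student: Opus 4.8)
The plan is to isolate the single place where the cofinality of a scale \emph{in its product} is used in the classical argument that $\square_\kappa^*$ makes scales good, and then check that the exact-upper-bound property of pseudo-scales substitutes for it. Write $\lambda = \cf \kappa$. The classical input is that $\square_\kappa^*$ implies $\kappa^+ \in I[\kappa^+]$: from the coherence of a $\square_\kappa^*$-sequence $\seq{\mathcal C_\alpha}{\alpha<\kappa^+}$ one enumerates $\bigcup_{\beta<\alpha}\mathcal C_\beta$ into an approachability sequence, and clause (2) of weak square guarantees that every proper initial segment of a $C \in \mathcal C_\alpha$ has already been listed before stage $\alpha$. Consequently there is a club $D \s \kappa^+$ such that each $\alpha \in D$ with $\cf \alpha > \lambda$ carries an internally approachable continuous elementary chain $\seq{N_\xi}{\xi<\cf\alpha}$ with $\vec f,\vec\kappa \in N_0$, $|N_\xi| = \cf\alpha < \kappa$ (note $\cf\alpha<\kappa$ as $\kappa$ is singular), $\seq{N_\eta}{\eta\le\xi} \in N_{\xi+1}$, and $\sup_\xi(N_\xi \cap \kappa^+)=\alpha$. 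It then suffices to show every such $\alpha$ is a good point of $\vec f$.

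Fix such an $\alpha$, set $\tau = \cf\alpha > \lambda$, $\delta_\xi = \sup(N_\xi \cap \kappa^+)$, and define the $\vec f$-characteristic $d_\xi(i) = \sup\{f_\beta(i) : \beta \in N_\xi \cap \kappa^+\}$. Since $\mu_i \to \kappa$ and $|N_\xi| = \tau < \kappa$, for all large $i$ we have $\mu_i > \tau$, hence $d_\xi(i)<\mu_i$; moreover $d_\xi$ is pointwise non-decreasing in $\xi$, and internal approachability yields $d_\xi \in N_{\xi+1}$. This is exactly the step that must be adapted: for a genuine scale one invokes cofinality in $\prod_i \mu_i$ to produce $\gamma<\kappa^+$ with $d_\xi <^* f_\gamma$, whereas for a pseudo-scale I will instead use that $\vec f$, being an exact upper bound, is cofinal in the product it generates \cite{Handbook-Abraham-Magidor}; since $N_\xi$ is bounded in $\kappa^+$, $d_\xi$ lies in that product, so some $\gamma_\xi$ still satisfies $d_\xi <^* f_{\gamma_\xi}$. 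By elementarity $\gamma_\xi \in N_{\xi+1}$, and the domination forces $\delta_\xi \le \gamma_\xi < \delta_{\xi+1}$, so $\seq{\gamma_\xi}{\xi<\tau}$ is strictly increasing and cofinal in $\alpha$.

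The goodness witness is then $A^* = \{\gamma_\xi : \xi \in Y\}$ for a suitable cofinal $Y \s \tau$. For $\xi<\xi'$ and all $i$ beyond the single threshold $j_{\xi'}$ witnessing $d_{\xi'} <^* f_{\gamma_{\xi'}}$, pointwise monotonicity of the $d$'s chains into $f_{\gamma_\xi}(i) \le d_{\xi+1}(i) \le d_{\xi'}(i) < f_{\gamma_{\xi'}}(i)$; since this threshold depends only on the larger index and $\tau>\lambda$ is regular, a pigeonhole on $\xi' \mapsto j_{\xi'}$ gives a cofinal $Y$ on which $j_{\xi'}$ is a constant $j^*$, and then $A^*$ together with $j^*$ witnesses goodness of $\alpha$. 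As $D$ is club, $\vec f$ is a good pseudo-scale.

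I expect the main obstacle to be the points of cofinality exactly $\lambda^+$: there the cruder approach---count the $\lambda$-many thresholds witnessing $f_{\beta_\eta}<^* f_{\beta_\xi}$ and stabilize them---fails outright, since no $\xi<\lambda^+$ has $\cf\xi>\lambda$, so one genuinely needs the approachability chain and the domination $d_\xi \in N_{\xi+1}$ rather than a counting argument. The pseudo-scale modification is localized but essential: the characteristic $d_\xi$ need not be dominated by any member of $\vec f$ unless $\vec f$ is cofinal in \emph{some} product, and it is precisely the exact-upper-bound clause in the definition of pseudo-scale that supplies this, which is the only respect in which the argument departs from the scale version recorded in \cite{Cummings2005}.
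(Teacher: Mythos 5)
The paper itself offers no proof of this Fact---it is stated with a footnote pointing to Cummings' survey for the scale version---so the comparison can only be against the standard argument, which is indeed the skeleton you describe: $\square^*_\kappa$ gives $\kappa^+\in I[\kappa^+]$, approachable points of cofinality above $\lambda=\cf\kappa$ carry internally approachable chains converging to them, and the characteristic functions of the chain produce the goodness witness after a pigeonhole on thresholds. That outline is correct, and your closing remarks rightly identify both why naive threshold-counting fails and where the pseudo-scale hypothesis has to enter.

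The gap sits at exactly the step you flag as the pseudo-scale adaptation. You set $d_\xi(i)=\sup\{f_\beta(i):\beta\in N_\xi\cap\kappa^+\}$ and claim that, since $\vec f$ is cofinal in the product generated by its exact upper bound $h$ and $N_\xi\cap\kappa^+$ is bounded, $d_\xi$ lies in that product and hence is dominated by some $f_{\gamma_\xi}$. This is false in general: each $f_\beta$ satisfies $f_\beta(i)<h(i)$ only beyond a threshold $j_\beta<\lambda$, and you are taking a pointwise supremum over $|N_\xi|=\tau>\lambda$ many indices, so the exceptional intervals $[0,j_\beta)$ can cover $\lambda$. Concretely, one may perturb a pseudo-scale at a single coordinate $i_\beta$ per index so that $f_\beta(i_\beta)>h(i_\beta)$ without disturbing $<^*$-increase or the exact upper bound; if $\{i_\beta:\beta\in N_\xi\cap\kappa^+\}$ is unbounded in $\lambda$, then $d_\xi(i)>h(i)>f_\gamma(i)$ for unboundedly many $i$ for every $\gamma$, and no $\gamma_\xi$ exists. (For a genuine scale the problem does not arise, because $d_\xi(i)<\mu_i$ holds outright once $\mu_i>|N_\xi|$ by regularity of $\mu_i$; it is the passage from $\mu_i$ to $h(i)$ that breaks.) The repair is routine but must be made: either replace $d_\xi$ by $i\mapsto\sup(N_\xi\cap h(i))$, which is below $h(i)$ once $\cf(h(i))>|N_\xi|$ and still eventually dominates each $f_\beta$ with $\beta\in N_\xi$, at the cost of a second pigeonhole to stabilize the new $\beta$-dependent thresholds; or, more cleanly, pass first to the genuine scale $\vec f'$ on $\prod_{i}\cf(h(i))$ associated to $\vec f$ by the very remark in the paper's definition of pseudo-scale, apply the scale version of the Fact to $\vec f'$, and transfer goodness back to $\vec f$ by thinning the witnessing set so that the thresholds for $f_\beta=^*e\circ f'_\beta$ stabilize (again a pigeonhole using $\cf\alpha>\lambda$). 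A smaller point: your argument needs each $\sup(N_\xi\cap\kappa^+)<\alpha$ with supremum $\alpha$, and arranging a chain with $\bigcup_\xi N_\xi\cap\kappa^+$ contained and cofinal in $\alpha$ is precisely what the approachability witness buys and is not automatic for an arbitrary internally approachable chain; without it the same construction yields only stationarily many good points, which is the ZFC theorem rather than the one at hand.
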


By \autoref{terse-weak-sq}, this question is almost equivalent (modulo a generalization and a strong limit assumption) to the question of Golshani mentioned above: a positive answer would mean that these hypotheses imply $\square_\kappa^*$, and a negative answer would mean that $\square_\kappa^*$ consistently fails in conjunction with these hypotheses.


\subsection*{Acknowledgements}

I proved an early version of \autoref{almost-silver-partial-sq} while being supported by Sy-David Friedman's FWF grant in Vienna, and I want to thank him for many helpful conversations. I also thank Dima Sinapova for many helpful conversations and critical readings of early versions of the paper. Also, I thank Chris Lambie-Hanson for catching mistakes in the original arXiv version and for referring me to a note that Assaf Rinot had written that built on this work \cite{Rinot-note2022}. Finally, I thank Sittinon Jirattikansakul for catching additional mistakes in the original version. Thanks also to the anonymous referee for improving the manuscript.

\bibliographystyle{plain}
\bibliography{bibliography}

\end{document}